\newtheorem{prop}{Proposition}[section]
\newtheorem{thm}[prop]{Theorem}
\newtheorem{Ethm}[prop]{Eliashberg's Theorem}
\newtheorem{cor}[prop]{Corollary}
\newtheorem{lem}[prop]{Lemma}
\theoremstyle{definition}
\newtheorem{example}[prop]{Example}
\newtheorem{examples}[prop]{Examples}
\theoremstyle{remark}
\def\complex{{\mathbb C}}
\def\C{{\mathbb C}}
\def\CP{{\mathbb C \mathbb P}}
\def\zed{{\mathbb Z}}
\def\real{{\mathbb R}}
\def\R{{\mathbb R}}
\def\int{\mathop{\rm int}\nolimits}
\def\dim{\mathop{\rm dim}\nolimits}
\def\rel{\mathop{\rm rel}\nolimits}
\begin{document}
\title{Constructing Stein manifolds after Eliashberg}
\author{Robert E. Gompf}
\thanks{Partially supported by NSF grant DMS-0603958.}
\address{Department of Mathematics, The University of Texas at Austin,
1 University Station C1200, Austin, TX 78712-0257}
\email{gompf@math.utexas.edu}
\date{October 9, 2008}
\begin{abstract}
A unified summary is given of the existence theory of Stein manifolds in all dimensions, based on published and pending literature. Eliashberg's characterization of manifolds admitting Stein structures requires an extra delicate hypothesis in complex dimension 2, which can be eliminated by passing to the topological setting and invoking Freedman theory. The situation is quite similar if one asks which open subsets of a fixed complex manifold can be made Stein by an isotopy. As an application of these theorems, one can construct uncountably many diffeomorphism types of exotic $\R^4$s realized as Stein open subsets of $\C^2$ (i.e. domains of holomorphy). More generally, every domain of holomorphy in $\C^2$ is topologically isotopic to other such domains realizing uncountably many diffeomorphism types. Any tame $n$-complex in a complex $n$-manifold can be isotoped to become a nested intersection of Stein open subsets, provided the isotopy is topological when $n=2$. In the latter case, the Stein neighborhoods are homeomorphic, but frequently realize uncountably many diffeomorphism types. It is also proved that every exhausting Morse function can be subdivided to yield a locally finite handlebody of the same maximal index, both in the context of smooth $n$-manifolds and for Stein surfaces.
\end{abstract}
\maketitle


\section{Introduction}

One of Eliashberg's foundational achievements is his pioneering work on the existence theory of Stein manifolds. These manifolds have been studied by complex analysts for most of the past century, and so have many equivalent definitions. The definition that most immediately indicates their fundamental nature is that they are complex affine analytic varieties, i.e., complex manifolds that embed holomorphically as closed subsets of $\C^N$. The importance of Stein manifolds mandates the study of two basic existence questions. Most obviously, we should ask which abstract smooth manifolds admit Stein structures. However, we can also work inside a fixed ambient complex manifold $X$. Every open subset of $X$ is itself a complex manifold, so we can ask which open subsets of $X$ are Stein. The study of such Stein open subsets has had a long and continuing history, already in the special case $X=\C^n$. For example, if $X$ is $\C^n$ (or any other Stein manifold), then its Stein open subsets $U$ have been characterized as being {\em domains of holomorphy\/}, meaning that they satisfy a certain maximality condition for extending holomorphic functions. (Specifically, for every connected, open $V\subset X$ extending outside of $U$, and every component $V_0$ of $U\cap V$, there are holomorphic functions on $U$ whose restriction to $V_0$ does not extend to $V$.) Since the Stein condition can be destroyed by a tiny perturbation of $U$, we reformulate the ambient existence question to put it within reach of a topological answer: Which open subsets of $X$ can be made Stein by an {\em isotopy\/}, i.e., a homotopy of the inclusion map through embeddings? (We define embeddings to be 1-1 immersions, but do not require them to be proper, so we are asking when an open set can be deformed into a new open set whose complex structure inherited from $X$ is Stein.) Building on Eliashberg's work of the late 1980s, we can now completely answer both the abstract and ambient existence questions. In complex dimension $n\ne 2$, Eliashberg completely characterized those manifolds admitting Stein structures by a simple differential topological condition. A similar statement can be obtained in the ambient setting. When $n=2$, one obtains the corresponding characterizations by imposing additional delicate conditions. Alternatively, one can eliminate these extra conditions by passing to the topological category, i.e., working up to homeomorphism or up to isotopy through topological ($C^0$ rather than $C^\infty$) embeddings, and invoking Freedman theory. As an application, one obtains domains of holomorphy in $\complex^2$ realizing uncountably many diffeomorphism types of exotic $\real^4$s. More generally, every domain of holomorphy in $\C^2$ is topologically isotopic to uncountably many other diffeomorphism types of such domains (all of which are homeomorphic).

This paper surveys published and pending literature to give a unified view across all dimensions of these consequences of Eliashberg's work. The abstract and ambient results are presented in Sections~\ref{Abstract} and \ref{Ambient}, respectively, and their proofs are sketched in Section~\ref{proofs}. Section~\ref{Onion} reexamines the ambient theory from several other viewpoints, replacing isotopy by the sharper notion of ambient isotopy and discussing when embedded CW-complexes can be described (after isotopy) as nested intersections of Stein open subsets. In the 4-dimensional topological setting, this frequently yields an uncountable neighborhood system realizing uncountably many exotic smooth structures. Section~\ref{Appendix} is an appendix proving several lemmas that are useful in earlier sections.

\section{Abstract characterization} \label{Abstract}

To motivate Eliashberg's work, we first consider necessary conditions for the existence of a Stein structure. Since a Stein manifold $U$ is complex, it comes endowed with an almost-complex structure, that is, its tangent bundle $TU$ is a complex vector bundle. We capture this structure with the bundle automorphism $J$ of $TU$ given by fiberwise multiplication by $i$. If such an almost-complex structure $J$ comes from a complex structure, then it uniquely determines the latter. However, we usually consider almost-complex structures only up to homotopy. The homotopy classification of complex bundle structures on $TU$ (or on any real vector bundle) is a standard problem in algebraic topology. Thus, it is reasonable to refine our original abstract existence question by asking which almost-complex structures on a given manifold (if any exist) are homotopic to Stein structures. In addition to having an almost-complex structure, a Stein manifold $U$ satisfies another classical condition: It admits an exhausting Morse function (i.e. a smooth, proper map $\varphi:U\to [0,\infty)$ whose critical points are nondegenerate) such that each critical point has index $\le n=\dim_\C U$. (Thus, $U$ has real dimension $2n$ but the homotopy type of an $n$-complex.) Surprisingly, Eliashberg's Theorem asserts that these necessary conditions are also sufficient.

\begin{Ethm}\label{absn} \cite{E}.
For $n\ne 2$, let $U$ be a $2n$-manifold with an almost-complex structure $J$. Then $J$ is homotopic to a Stein structure if and only if $U$ admits an exhausting Morse function whose critical points all have index $\le n$.
\end{Ethm}

\noindent In particular, $U$ admits a Stein structure if and only if it admits an almost-complex structure and suitable Morse function. The original proof appeared in \cite{E}, but a simpler and more expository version will be given in the book \cite{CE}. We will discuss the latter version in Section~\ref{proofs}.

It is sometimes useful to replace Morse theory by the language of handlebodies (e.g. \cite{GS}). Recall that a {\em handle of index $k$\/}, or {\em $k$-handle\/}, is a copy of $D^k\times D^{m-k}$ attached to the boundary of an $m$-manifold along its attaching region $\partial D^k\times D^{m-k}$. A (topological or smooth) {\em handlebody\/} $H$ is a manifold built from the empty set by successively attaching handles (necessarily beginning with a 0-handle). In the smooth case, we glue each handle by a smooth embedding of its attaching region, then smooth the resulting corners. (These corners occur along $\partial D^k\times\partial  D^{m-k}$. By uniqueness of tubular neighborhoods, we can canonically identify a neighborhood of the corner locus with the product of $\partial D^k\times\partial  D^{m-k}$ with $\R^2$ minus the open first quadrant, then smooth using the essentially unique smoothing of the last factor.) Handlebodies are necessarily {\em locally finite\/} in the sense that only finitely many handles attach to a given handle. (Attaching infinitely many handles to a compact boundary region would cause clustering destroying the manifold structure.) We also require handlebodies to be {\em Morse ordered\/}, so each $k$-handle attaches to a subhandlebody consisting of handles with index $<k$. For compact handlebodies (i.e. those with finitely many handles), this costs no generality by a general position argument, but infinite handlebodies are restricted, e.g. requiring infinitely many 0-handles. We impose this constraint since the cores $D^k\times \{ 0\}$ of the handles will then fit together inside the handlebody to give an embedded CW-complex $K$. (Note that a CW-complex is Morse-ordered by definition and locally finite if it embeds in a manifold.) If there are no $m$-handles, then $H-K$ is homeomorphic (diffeomorphic in the smooth case) to $\partial H\times (0,1]$, realizing $H$ as the mapping cylinder of a map $\partial H\to K$.

To relate Morse functions with handlebodies, suppose $a\in [0,\infty)$ is a critical value of an exhausting Morse function $\varphi:U\to \R$, corresponding to a unique critical point $p$ in $\varphi^{-1}[a-\epsilon,a+\epsilon]\subset U$. Then $\varphi^{-1}[0,a+\epsilon]$ is obtained from $\varphi^{-1}[0,a-\epsilon]$ by adding a collar $I\times \varphi^{-1}(a-\epsilon)$ to the boundary and then attaching a handle with the same index as $p$ (e.g.  \cite{M}). If there are only finitely many critical points, we may then identify $U$ with the interior of a handlebody by collapsing out the intervening collars. In our situation, though, there may be infinitely many critical points (although only finitely many in each $\varphi^{-1}[0,b]$ since $\varphi$ is proper). Collapsing the collars may no longer yield a Morse-ordered handlebody. One could remedy this by simply adding collars into the definition of a handlebody, but only at the expense of losing Morse-ordering or local finiteness. This would complicate the statements of some of our theorems, and lose the natural CW cores and mapping cylinder structure. Instead, we show in the Appendix (Lemma~\ref{decomp}) that suitable subdivision allows us to identify $U$ with the interior of a handlebody, even if $\varphi$ has infinitely many critical points, and the maximal index of the handles is that of the critical points of $\varphi$. Applying this to Theorem~\ref{absn}, we conclude:

\begin{cor}\label{absn1}
Under the hypotheses of Eliashberg's Theorem, $J$ is homotopic to a Stein structure if and only if $U$ is diffeomorphic to the interior of a handlebody whose handles all have index $\le n$.
\end{cor}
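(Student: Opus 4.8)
The plan is to combine Eliashberg's Theorem with a purely differential-topological equivalence that never mentions $J$: a $2n$-manifold $U$ admits an exhausting Morse function all of whose critical points have index $\le n$ if and only if $U$ is diffeomorphic to the interior of a handlebody all of whose handles have index $\le n$. Once this equivalence is in hand, the corollary is immediate, since Eliashberg's Theorem identifies ``$J$ homotopic to a Stein structure'' with the Morse-function condition. So the real content is to prove both implications of the equivalence.

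For the forward implication I would start from an exhausting Morse function $\varphi$ whose critical points all have index $\le n$ and simply invoke Lemma~\ref{decomp}: after suitable subdivision, $\varphi$ presents $U$ as the interior of a (Morse-ordered, locally finite) handlebody whose maximal handle index equals the maximal index among the critical points of $\varphi$, hence is $\le n$. Since that maximum bounds every handle index, all handles have index $\le n$, as required.

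For the converse I would reverse the handle/Morse dictionary recalled just before the corollary. Writing $U\cong \int H$ for a handlebody $H$ with all handles of index $\le n$, I would place on each handle its standard model Morse function, with a single critical point of index equal to the handle's index, and glue these together compatibly with the Morse ordering to obtain a Morse function whose critical points lie on the core $K$ and whose indices are exactly the handle indices, all $\le n$. Since the handle indices are at most $n<2n$, there are no top-index handles, so $H$ is the mapping cylinder of a map $\partial H\to K$ and $H-K\cong \partial H\times(0,1]$; reparametrizing this collar so that $\partial H$ is pushed to infinity converts the function into a smooth map $\varphi:U\to[0,\infty)$ with the same critical points.

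The main obstacle is properness when $\varphi$ has infinitely many critical points. Here the hypotheses that the handlebody is locally finite and Morse ordered are essential: they let me exhaust $H$ by an increasing sequence of compact subhandlebodies and assign the successive handles strictly increasing critical values, so that each sublevel set meets only finitely many handles. Combining this arrangement with the collar reparametrization that sends the ends of $U$ off to infinity shows that every sublevel set $\varphi^{-1}[0,c]$ is compact, so $\varphi$ is exhausting and Eliashberg's Theorem applies to yield the Stein structure.
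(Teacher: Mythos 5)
Your proposal is correct and follows essentially the same route as the paper: the forward direction is exactly the paper's appeal to Lemma~\ref{decomp} applied to the Morse function furnished by Eliashberg's Theorem, and the converse is the standard handle-to-Morse dictionary that the paper leaves implicit. Your extra care about properness in the infinite case (via local finiteness and Morse ordering) is a sound filling-in of that implicit step rather than a genuinely different argument.
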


As we will see in Section~\ref{proofs}, Eliashberg constructs his Stein structure from $J$ by induction on increasing critical values, or from the other viewpoint, by induction on handles. The induction step works in all cases except for 4-dimensional 2-handles, resulting in a proof for all $n\ne2$. In the problematic case, the induction step may or may not work, depending on the framing of the 2-handle, so the following theorem results for Stein {\em surfaces\/} (i.e. the $n=2$ case).

\begin{thm}\label{abs2} (Eliashberg.)
An oriented 4-manifold admits a Stein structure if and only if it is diffeomorphic to the interior of a handlebody whose handles all have index $\le 2$, and for which each 2-handle is attached along a Legendrian knot (in the standard contact structure on the relevant boundary 3-manifold) with framing obtained from the contact framing by adding one left twist.
\end{thm}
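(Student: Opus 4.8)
The plan is to prove both directions by reducing to Eliashberg's handle-by-handle construction described in Section~\ref{proofs}, and then isolating the one place where complex dimension $2$ behaves differently from the general case: the attachment of a critical ($2$-)handle. So I would first dispose of the subcritical handles exactly as in the $n\ne2$ argument, and concentrate all the real work on the $2$-handles, where the Legendrian framing hypothesis enters.

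For the \emph{necessity} direction, I would start from a strictly plurisubharmonic exhausting function on the Stein surface and, after a $C^2$-small perturbation keeping it plurisubharmonic, arrange it to be Morse. Each regular sublevel set is then a strictly pseudoconvex (Stein) domain whose boundary carries the contact structure of complex tangencies, and Lemma~\ref{decomp} lets me read off a genuine handlebody with handles of index $\le 2$. The only thing left to check is the local behavior at an index-$2$ critical point $p$: in Morse coordinates adapted to $J$, the descending sphere of $p$ is a circle in the pseudoconvex level set just below $p$, and I would verify that this circle is isotropic for the contact distribution --- hence Legendrian --- and that the framing induced by the descending disk differs from the contact (Thurston--Bennequin) framing by exactly one left twist. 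Both facts should follow from the standard normal form for a plurisubharmonic Morse function near a critical point of top index, where the attachment is modeled on a Weinstein handle.

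For the \emph{sufficiency} direction, I would build the Stein structure inductively over the handles, starting with the standard plurisubharmonic function $|z|^2$ on a $0$-handle (a ball in $\complex^2$). At each subcritical ($0$- or $1$-)handle the attaching sphere has dimension below the middle, so Eliashberg's flexibility lets me $C^0$-isotope the attaching map to an isotropic one with the correct framing and extend the plurisubharmonic function with no constraint; this step is identical to the general $n\ne2$ argument, and it leaves the boundary with the standard Stein-fillable contact structure in which the attaching knots of the hypothesis live. The substance is concentrated in the critical $2$-handles. Here the hypothesis supplies an attaching circle that is already Legendrian with framing equal to the contact framing minus one, which is precisely the framing with which a Weinstein $2$-handle attaches to a contact $3$-manifold. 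I would then glue in the standard Stein $2$-handle along this circle, so that its plurisubharmonic model matches the boundary contact structure and extends the function already built; taking the direct limit over all handles yields the desired Stein structure on a manifold diffeomorphic to the given $U$.

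The hard part --- and the reason complex dimension $2$ requires this explicit Legendrian hypothesis rather than obtaining it for free as when $n\ne2$ --- is exactly the critical $2$-handle. For $n\ne2$ the critical attaching sphere $S^{n-1}$ sits in a contact manifold of dimension $\ge 5$, where the $h$-principle for isotropic embeddings lets one deform an arbitrary attaching sphere into a Legendrian one in the correct homotopy class with automatically correct framing, so the induction step goes through unconditionally. In complex dimension $2$ the attaching sphere is a knot in a contact $3$-manifold, where this $h$-principle fails: the Legendrian condition is rigid and the Thurston--Bennequin framing is a genuine, non-adjustable invariant. Consequently the Stein condition cannot be forced by a deformation that ignores the $2$-handle framing, and the theorem must instead record this rigidity as a hypothesis on the handlebody. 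I expect the delicate point to be verifying that the Weinstein $2$-handle model extends the plurisubharmonic function when, and only when, the attaching framing exceeds the contact framing by a single left twist.
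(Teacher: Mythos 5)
Your overall strategy is the paper's: both directions reduce to Eliashberg's handle-by-handle induction, the subcritical handles go through exactly as for $n\ne2$, the necessity of the framing condition comes from the local model of the standard complex $2$-handle (the tubular neighborhood of $D^2\subset\R^2\subset\C^2$, where the contact framing $i\tau$ winds once relative to the normal framing of the core), and sufficiency comes from the fact that this framing is precisely what is needed to attach that standard handle preserving $\xi$ along $S$, bypassing Lemma~\ref{hprin}. Your diagnosis of why complex dimension $2$ is rigid (failure of the isotropic $h$-principle for knots in contact $3$-manifolds, versus flexibility in dimension $\ge5$) also matches Section~\ref{proofs}.

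There is, however, one genuine gap, in the necessity direction. You invoke Lemma~\ref{decomp} to convert the plurisubharmonic Morse function into a handlebody, but that lemma is purely smooth: when $\varphi$ has infinitely many critical points it subdivides each collar $[0,1]\times\partial P_i$ using an \emph{arbitrary} handle decomposition of $\partial P_i$, so the extra $2$-handles of the form $[0,\tfrac12]\times h$ have no reason to be attached along Legendrian knots, let alone with the correct framing. Moreover, after this subdivision the descending circle of an original index-$2$ critical point is attached to the boundary of a new subhandlebody rather than to the level set $\varphi^{-1}(a)$, and its Legendrian property and framing must be re-established with respect to the contact structure on that new boundary. This is exactly why the paper proves the separate Lemma~\ref{steindec}, whose proof requires contact cell decompositions, convex surface theory, and ``sufficiently fine'' Legendrian $1$-skeleta in order to carry out the subdivision compatibly with the contact geometry at every stage; the paper explicitly notes that this is the sharpening that makes the statement work for Morse-ordered handlebodies without collars when there are infinitely many critical points. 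For a Stein surface whose exhausting plurisubharmonic function has finitely many critical points your argument is complete, but for the general case as stated, the citation of Lemma~\ref{decomp} must be replaced by Lemma~\ref{steindec}, and that replacement is a substantial piece of contact topology rather than a formality.
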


\noindent For terminology and additional explanation, see Section~\ref{proofs}. The main observation there is that each 2-handle is attached to a finite union of 0- and 1-handles, whose boundary $\#m S^1\times S^2$ admits essentially a unique tight contact structure, with respect to which we can define Legendrian knots and contact framings. Theorem~\ref{abs2} was essentially known to Eliashberg when he wrote \cite{E}, but it was not contemporaneously published. We have sharpened it slightly, using the Appendix (Lemma~\ref{steindec}) to write it in the context of Morse-ordered handlebodies without collars. (This sharpening is what makes a concise statement possible when there are infinitely many handles. From the Morse viewpoint, one must build the Stein structure on the possibly complicated manifold below the level of a new 2-handle before the framing condition makes sense.) For further discussion and applications of the theorem, see e.g. \cite{CE}, \cite{Ann}, \cite{GS} or \cite{OS}. Note that a handlebody with all indices $\le2$ has the homotopy type of a 2-complex, so if it is oriented, it automatically admits almost-complex structures (respecting the given orientation). Thus it suffices to hypothesize an orientation rather than an almost-complex structure. On the other hand, the almost-complex structure is completely determined by the Legendrian link along which the 2-handles are attached (see Section~\ref{proofs}). Unlike in other dimensions, there are manifolds admitting Stein structures realizing some, but not all, homotopy classes of almost-complex structures. (In fact, this occurs for every Stein surface with $H_2(U;\zed)\ne0$, by the Adjunction Inequality from Seiberg-Witten theory \cite{LM}.)

We now have a complete characterization, in all dimensions, of which manifolds admit Stein structures, and which homotopy classes of almost-complex structures are so realized. However, the criterion for 4-manifolds is much harder to apply than in other dimensions. Furthermore, if we ignore the delicate framing condition, we immediately run into trouble. The simplest handlebody interior requiring a 2-handle is $S^2\times \real^2$. While this admits a complex structure ($\CP^1\times \C$), it admits no Stein structure, because it contains a homologically essential 2-sphere with trivial normal bundle. (By Seiberg-Witten theory, every smoothly embedded essential sphere in a Stein surface has normal Euler number $\le -2$ \cite{LM}.) To simplify the 4-dimensional characterization, we apply the fundamental principle of Freedman \cite{F}, \cite{FQ}, that high-dimensional differential topology frequently works in dimension 4 after we give up smoothness and enter the purely topological world. We replace the problematic 2-handles by more flexible {\em Casson handles\/}, which are homeomorphic to the open 2-handle $D^2\times \real^2$ but have exotic smooth structures. We then recover a precise analog of Eliashberg's Theorem up to homeomorphism.

\begin{thm}\label{abstop} \cite{Ann}.
An oriented 4-manifold is homeomorphic to a Stein surface if and only if it is homeomorphic to the interior of a handlebody whose handles all have index $\le 2$. Every homotopy class of almost-complex structures on such a handlebody interior is realized by an orientation-preserving homeomorphism to a Stein surface.
\end{thm}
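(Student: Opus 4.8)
The plan is to treat the two directions separately, the forward implication being essentially immediate and the reverse carrying all the weight. For necessity, suppose the oriented $4$-manifold $M$ is homeomorphic to a Stein surface $U$. By Eliashberg's Theorem in the form of Corollary~\ref{absn1} (equivalently Theorem~\ref{abs2}), $U$ is diffeomorphic to the interior of a handlebody all of whose handles have index $\le 2$; composing with the given homeomorphism shows that $M$ is homeomorphic to such a handlebody interior.

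For sufficiency, suppose $M$ is homeomorphic to $\int H$ for a handlebody $H$ with handles of index $\le 2$, and fix a homotopy class of almost-complex structure $J$ (one exists since $H$ has the homotopy type of an oriented $2$-complex). I would try to build a Stein surface directly by Eliashberg's handle-by-handle induction, taking Theorem~\ref{abs2} as the target: put the $0$- and $1$-handles into standard Stein form and isotope each $2$-handle attaching circle to be Legendrian in the (essentially unique, tight) contact structure on the boundary of the sub-handlebody beneath it. The homotopy class of the resulting almost-complex structure is governed by the rotation numbers of these Legendrian knots, so the only genuine constraint is the framing condition of Theorem~\ref{abs2}: each $2$-handle must be attached with framing equal to the contact framing minus one left twist.

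The key step is to confront the framing obstruction. Stabilizing a Legendrian knot lowers its Thurston--Bennequin number by one, so every sufficiently negative framing --- precisely, every framing at most the maximal Thurston--Bennequin number of the knot type minus one --- is realizable by an honest Stein $2$-handle. The problematic case is a framing that is too positive, where Eliashberg's smooth condition genuinely fails (the standard example being the $0$-framed $2$-handle producing $S^2\times\real^2$, which admits no Stein structure). I would resolve this by replacing each problematic smooth $2$-handle with a \emph{Casson handle}, built as an infinite tower of kinky handles. The crucial point is that a positive kink (self-plumbing) --- the sign compatible with complex curves, whose self-intersections are positive --- relaxes the framing budget: the immersed core can carry a more positive boundary framing, at the cost of requiring each new double-point loop to bound a further kinky handle at the next stage. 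That next stage again offers the same freedom to insert positive kinks and to stabilize, so one iterates indefinitely; it is precisely the infinitude of the tower that lets the framing (and rotation-number) constraints of Theorem~\ref{abs2} be met at every finite level, and this added flexibility is also what allows \emph{every} homotopy class $J$ to be realized, in contrast to the smooth case.

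The main obstacle, and the heart of the argument, is to carry this infinite induction to completion: one must verify that the stage-by-stage framing bookkeeping is always solvable and that the resulting nested Stein structures assemble into a genuine Stein structure on the open Casson handle. Granting this, Freedman's theory supplies the remaining ingredient --- each such Casson handle is homeomorphic, rel its attaching region, to the standard open $2$-handle $D^2\times\real^2$. Gluing the Stein Casson handles, together with any honest Stein $2$-handles, onto the Stein sub-handlebody of $0$- and $1$-handles then yields a Stein surface $U$, while the Freedman homeomorphisms assemble to an orientation-preserving homeomorphism $\int H\to U$. Since the Legendrian and kink data were arranged to realize the prescribed rotation numbers, the complex structure on $U$ lies in the homotopy class $J$, giving the final realization statement.
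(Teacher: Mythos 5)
Your proposal is correct and follows essentially the same route as the paper's own sketch: realize the attaching circles as Legendrian knots with the prescribed rotation numbers, interpret the framing shortfall as a count of (positive) double points, trade each problematic $2$-handle for an infinite tower of kinky handles carrying a Stein structure, and invoke Freedman's theorem that the resulting Casson handles are homeomorphic to open $2$-handles. The only cosmetic difference is that the paper phrases the double-point bookkeeping via normal Euler numbers of immersed surfaces rather than a ``framing budget,'' and, like you, it defers the convergence of the infinite construction to \cite{Ann}.
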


\noindent Note that 4-dimensional topological handlebodies are uniquely smoothable, since they are made by gluing along 3-manifolds and the gluing maps are uniquely smoothable. Thus, it does not matter whether the handlebody (or 4-manifold) in the theorem is topological or smooth. To see why a homeomorphism determines a bijection of homotopy classes of almost-complex structures (which can be reinterpreted as spin$^\complex$-structures since the base is essentially a 2-complex), see \cite{Ann}. One way to interpret this result is that Eliashberg's Theorem holds in all dimensions, provided that when $n=2$ we are allowed to change the smooth structure on the manifold. The resulting smooth structures have peculiarities typical to dimension 4. Such Stein surfaces are typically not diffeomorphic to the interior of any compact 4-manifold, even if the original handlebody is finite. Furthermore, a single handlebody often corresponds to uncountably many diffeomorphism types of Stein surfaces.

\begin{examples} \label{abstract}
a) According to this theorem, $S^2\times \real^2$ is homeomorphic to Stein surfaces realizing all possible homotopy classes of almost-complex structures. (These homotopy classes are classified by their first Chern number, which can be any even integer.) As we have seen, such a  Stein surface cannot be diffeomorphic to $S^2\times \real^2$. In fact, there are uncountably many diffeomorphism types of Stein surfaces $U$ homeomorphic to $S^2\times \real^2$, and we can arrange the minimal genus of a smoothly embedded surface generating $H_2(U)$ to be any preassigned positive integer \cite{steintop}. This is the simplest nontrivial example, yet it is rather typical (except that the Chern class does not completely classify almost-complex structures when there is 2-torsion in $H^2(U;\zed)$).

b) One of the most peculiar phenomena unique to 4-dimensional topology is the existence of exotic $\R^4$s, that is, manifolds homeomorphic to Euclidean space $\R^4$ but not diffeomorphic to it. These have a complicated history, dating back to ideas of Casson in the 1970s \cite{C}. Most directly relevant to this paper is \cite{DF}, which exhibits uncountably many diffeomorphism types of exotic $\R^4$s, each occurring as an open subset of $\R^4$ (with its usual smooth structure). After simplifying (\cite{BG}, see also \cite{GS}), one can explicitly build each member of such a family with a 0-handle, two 1-handles, a 2-handle and a Casson handle (taking the resulting interior). By a careful application of Theorem~\ref{abs2}, using the main idea behind Theorem~\ref{abstop} to deal with the Casson handle, one can endow such an uncountable family of exotic $\R^4$s with Stein structures \cite{Ann}. In contrast, there is an uncountable family of exotic $\R^4$s that cannot be smoothly embedded in $\R^4$ \cite{Ta}, and these cannot be constructed as handlebody interiors without using infinitely many 3-handles \cite{T}, so they cannot admit Stein structures.
\end{examples}

\section{Ambient characterization} \label{Ambient}

We now return to the question of which open subsets $U$ of a fixed complex surface $X$ are isotopic to Stein open subsets. We wish to find characterizations analogous to the theorems of the previous section. Note that $U$ inherits a complex structure from $X$, and any isotopy of $U$ induces a homotopy of complex structures (which we now view as almost-complex structures). Thus, we should ignore our previous hypotheses regarding almost-complex structures, since they are now built into the topological setup. We are left with hypotheses regarding Morse functions or handlebodies, and these turn out also to suffice in the ambient setting.

We first state the analog of Theorem~\ref{absn} and Corollary~\ref{absn1}.

\begin{thm}\label{ambin} (Eliashberg.)
For $n\ne2$, an open subset $U$ of a complex $n$-manifold $X$ is smoothly isotopic to a Stein open subset if and only if it admits an exhausting Morse function whose critical points all have index $\le n$, or equivalently, it is diffeomorphic to the interior of a smooth handlebody with all indices $\le n$.
\end{thm}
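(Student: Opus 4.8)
The plan is to dispose of the elementary parts first and concentrate on Eliashberg's construction. The final ``or equivalently'' clause concerns no complex geometry: the equivalence between admitting an exhausting Morse function with all critical points of index $\le n$ and being diffeomorphic to the interior of a locally finite, Morse-ordered handlebody with all indices $\le n$ is precisely Lemma~\ref{decomp}, so I would invoke it and phrase everything in terms of handlebodies. Necessity is then immediate: a smooth isotopy carrying $U$ to a Stein open subset $U'\subset X$ restricts to a diffeomorphism $U\to U'$, and every Stein manifold carries a strictly plurisubharmonic exhaustion function whose generic Morse perturbation has all critical points of index $\le n$; pulling this back equips $U$ with the required Morse function. (The almost-complex hypothesis of Theorem~\ref{absn} is automatic in the ambient setting, since $U\subset X$ inherits one.)

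The content is the sufficiency direction, which I would prove by replaying Eliashberg's handle-by-handle argument ambiently. In the abstract Theorem~\ref{absn} one is free to homotope the almost-complex structure $J$ while building a strictly plurisubharmonic exhaustion one handle at a time; here $J$ is fixed by $X$ and cannot be moved, so instead I would isotope $U$ inside $X$, stage by stage, testing pseudoconvexity against the ambient $J$. Concretely, I would start from a small strictly pseudoconvex ball about a $0$-handle, and at each stage attach the next $k$-handle ($k\le n$) by first isotoping its attaching sphere to be isotropic --- Legendrian when $k=n$ --- in the contact structure induced on the strictly pseudoconvex boundary of the domain built so far. Attaching the handle along such a sphere with its canonical pseudoconvex framing keeps the new sublevel set strictly pseudoconvex, so the growing union of subdomains remains Stein; the final union is diffeomorphic to the handlebody interior $U$, and the concatenation of the stagewise isotopies connects it to the original inclusion.

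The crux is the top-index step $k=n$. When $k<n$ the attaching sphere becomes isotropic by general position and the handle attachment is automatically pseudoconvex, so those stages are unobstructed. When $k=n$, however, the attaching sphere must be made Legendrian by the $h$-principle (equivalently, one must produce a totally real core disk), and its \emph{canonical} pseudoconvex framing must agree with the smooth attaching framing prescribed by the handlebody $U$. This matching is exactly where Eliashberg's induction either succeeds or fails: it is unobstructed when $n\ne2$, but when $n=2$ it forces the delicate framing condition of Theorem~\ref{abs2}, which is why the present theorem excludes $n=2$. Beyond this, I expect the real care to lie in two bookkeeping issues: propagating the strictly plurisubharmonic function coherently across each handle as a genuine ambient isotopy of open subsets of $X$ rather than as an abstract change of $J$, and managing the locally finite, possibly infinite handle decomposition so that the nested subdomains and their exhaustion functions converge to a single well-defined Stein open subset isotopic to $U$.
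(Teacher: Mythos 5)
Your overall architecture matches the paper's: reduce to handlebodies via Lemma~\ref{decomp}, get necessity by pulling back a plurisubharmonic exhaustion through the diffeomorphism onto the Stein image, and prove sufficiency by running Eliashberg's handle induction ambiently in $X$. But the induction step as you describe it has a gap at its center. Making the attaching sphere isotropic (Legendrian when $k=n$) is only the boundary part of the problem: the handle must be realized as a subset of $X$ by an isotopy of the original handle, and what pseudoconvexity actually requires is that the entire core disk $D$ be totally real with respect to the \emph{fixed} ambient $J$. You mention a ``totally real core disk'' only parenthetically, as if it were equivalent to the Legendrian condition on $\partial D$; it is not --- it is a separate isotopy of $D$ rel $\partial D$, governed by a different h-principle (Gromov's, for totally real embeddings), whose hypothesis is a bundle-theoretic condition on $J$ restricted to $TD$ that must be verified. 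Verifying it is the one genuinely new move in the ambient proof: the paper first runs Lemma~\ref{hprin} to produce the isotopy $f_t$ together with the homotopy $J_t$ from the abstract argument, observes that $f_t^*J$ agrees with $f_t^*J_t$ at $t=0$ and on $\xi_t|S$ for all $t$, and concludes that $f_1^*J$ is homotopic rel $\xi_1|S$ to the standard structure $i$ on $Th|D$ --- so the obstruction vanishes and the h-principle supplies the totally real isotopy. Your sketch never makes this comparison, and the phrase ``attach the handle with its canonical pseudoconvex framing'' imports the abstract construction, where one is free to build the handle as a standard complex handle, into a setting where the handle's complex structure is dictated by $X$. As written, the induction step does not close.

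Two smaller corrections. For $k<n$ the attaching sphere does not become isotropic ``by general position'': isotropy (tangency to $\xi$ everywhere) is a highly non-generic condition, achieved by Gromov's h-principle for isotropic immersions; general position enters only afterwards, to upgrade the resulting regular homotopy to an isotopy, and that upgrade is exactly what fails for $k=n=2$. Also, once $D$ is totally real one still needs a further $C^1$-small isotopy making it real-analytic before Eliashberg's machinery extends the plurisubharmonic function over a neighborhood of $P\cup D$. Your treatment of necessity and of the ``or equivalently'' clause via Lemma~\ref{decomp} is correct.
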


\noindent Our requirement that handlebodies be Morse-ordered and without collars is again convenient here, for we can think of embedded handlebodies as tubular neighborhoods of smoothly embedded CW-complexes. We investigate this correspondence more deeply in Section~\ref{Onion}. Theorem~\ref{ambin} is implicit in \cite{E} in the sense that it follows by the method of proof of Theorem~1.3.6 of that paper. However, it does not seem to have previously appeared in the above form. There is essentially a complete proof in a paper of Forstneri{\v c} and Slapar \cite{FoS}. Their main objective was to extend Eliashberg's work to make a manifold Stein while a preassigned continuous map to some complex manifold becomes homotopic to a holomorphic map. If one erases the parts concerning the auxiliary map, however, one essentially obtains the proof of Theorem~\ref{ambin} implicit in \cite{E}. (More precisely, one immediately obtains Theorem~\ref{ambin} by applying Theorem~1.2 of \cite{FoS} to the complex manifold $U$, setting the auxiliary complex manifold equal to a point.)

Next we consider the ambient analog of Theorem~\ref{abs2}. We have already seen that Theorem~\ref{ambin} fails when $n=2$. (Consider the obvious embedding $S^2\times\R^2\subset \C^2$.) The problem is that the manifold $U$ may not even abstractly admit a Stein structure. The following theorem bypasses this difficulty.

\begin{thm} \label{ambi2} \cite{steindiff}.
An open subset $U$ of a complex surface is smoothly isotopic to a Stein open subset if and only if the induced complex structure on $U$ is homotopic (through almost-complex structures on $U$) to a Stein structure on $U$.
\end{thm}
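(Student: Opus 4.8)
The plan is to prove the two implications separately, the forward (``only if'') direction being a formal consequence of naturality and the reverse (``if'') direction requiring an ambient version of Eliashberg's inductive construction.

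For necessity, suppose $\varphi_t\colon U\to X$, $t\in[0,1]$, is a smooth isotopy with $\varphi_0$ the inclusion and $V=\varphi_1(U)$ a Stein open subset. Pulling back the ambient integrable structure $J_X$ along each embedding gives a smooth one-parameter family $\varphi_t^*J_X$ of almost-complex structures on $U$. At $t=0$ this is precisely the induced complex structure $J_U$, while at $t=1$ the diffeomorphism $\varphi_1\colon U\to V$ is tautologically biholomorphic from $(U,\varphi_1^*J_X)$ to $(V,J_X|_V)$; since a complex manifold biholomorphic to a Stein manifold is itself Stein, $\varphi_1^*J_X$ is a Stein structure on $U$. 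Thus $\{\varphi_t^*J_X\}$ is the required homotopy through almost-complex structures from $J_U$ to a Stein structure, and this direction needs nothing beyond the definitions.

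For sufficiency I would realize an abstract Stein structure ambiently by building an isotopy of $U$ in $X$ inductively over a handle decomposition. By hypothesis $J_U$ is homotopic to a Stein structure $J'$, so by Eliashberg's Theorem in the form of Theorem~\ref{abs2} the abstract surface $U$ carries a Morse-ordered handle decomposition with all indices $\le 2$, in which each $2$-handle is attached along a Legendrian knot with framing obtained from the contact framing by adding one left twist, and the underlying almost-complex structure lies in the homotopy class of $J'$, hence of $J_U$. I would then isotope $U$ inside $X$ by induction on handles, arranging at each stage that the relevant sublevel set has strictly $J_X$-pseudoconvex boundary; since a complex manifold exhausted by such pseudoconvex hypersurfaces is Stein, the terminal position $\varphi_1(U)$ is a Stein open subset. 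The $0$- and $1$-handles are subcritical and present no difficulty: a $0$-handle is placed as a small round ball in a holomorphic chart, and each $1$-handle is attached to the current pseudoconvex boundary by the standard subcritical model.

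The hard part, exactly as in the abstract setting, is the critical case of the $2$-handles. Here I must isotope the attaching circle so that it becomes Legendrian in the field of complex tangencies on the current strictly pseudoconvex boundary, with framing obtained from this contact framing by adding one left twist; once this is done, the standard strictly pseudoconvex $2$-handle model extends the exhaustion. The framing of the attaching circle relative to the handle is pinned down by the (fixed) abstract handlebody structure of $U$, whereas the contact framing and the associated Thurston--Bennequin and rotation data depend on $J_X$ along the boundary, which the isotopy controls. The crux of the argument is that the homotopy from $J_U$ to $J'$ supplies precisely the formal bundle-theoretic data---encoded in the rotation numbers and the relative first Chern class---that lets one isotope the attaching circle to a Legendrian representative meeting the framing condition, with the rotation numbers adjusted by Legendrian stabilization so that the almost-complex structure of the resulting ambient Stein exhaustion stays in the prescribed homotopy class. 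Matching these framings against the homotopy class of $J$ uniformly across all $2$-handles is the technical heart of the proof; the subcritical steps and the final passage to the Stein conclusion are then routine.
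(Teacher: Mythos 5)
Your proposal follows essentially the same route as the paper: the forward direction is the same formal pullback of $J_X$ along the isotopy, and the reverse direction is Eliashberg's ambient handle induction from the proof of Theorem~\ref{ambin}, with the hypothesized abstract Stein structure (via Theorem~\ref{abs2} and Lemma~\ref{steindec}) supplying the Legendrian attaching data needed to bypass Lemma~\ref{hprin} for the $2$-handles. The paper likewise only sketches this step and defers the technical heart---transferring the abstract Legendrian framing and rotation-number data to the ambient pseudoconvex boundaries via the uniqueness of the tight contact structure---to \cite{steindiff}, so the gap you acknowledge coincides with the one the paper itself leaves open.
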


\noindent Thus, the difficulty of making open subsets Stein by smooth isotopy in this dimension is already contained in the problem of abstractly creating a Stein structure, and is addressed by Theorem~\ref{abs2}.

\begin{example} \label{R4}
Let $U\subset \R^4=\C^2$ be one of the exotic $\R^4$s of Example~\ref{abstract}(b). We have already seen that $U$ admits a Stein structure. But $U$ is contractible, so it has a unique homotopy class of almost-complex structures compatible with the given orientation. By arranging the identification of $\R^4$ with $\C^2$ to respect this orientation, we guarantee that the induced complex structure on $U$ is homotopic to a Stein structure. Since a smooth isotopy preserves the diffeomorphism type of the domain, Theorem~\ref{ambi2} exhibits a domain of holomorphy in $\C^2$ diffeomorphic to $U$, an exotic $\R^4$. According to Example~\ref{abstract}(b), we obtain uncountably many diffeomorphism types of such exotic $\R^4$ domains of holomorphy in $\C^2$. More generally, we can replace $\C^2$ in this argument by an $\epsilon$-ball, obtaining a similar uncountable family inside any complex surface. The same method can be used to find many contractible domains of holomorphy in $\C^2$ that (unlike exotic $\R^4$s) are bounded by smooth, compact, pseudoconvex homology 3-spheres \cite{steindiff}.
\end{example}

Finally, we turn to the analog of Theorem~\ref{abstop}. As in that theorem, the extra hypothesis disappears when we replace 2-handles by Casson handles. We obtain a precise analog of Theorem~\ref{ambin} for $n=2$, up to topological isotopy.

\begin{thm}\label{ambitop} \cite{JSG}, \cite{steintop}.
An open subset $U$ of a complex surface $X$ is topologically isotopic to a Stein open subset if and only if it is homeomorphic to the interior of a handlebody with all indices $\le 2$.
\end{thm}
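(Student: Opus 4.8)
The plan is to establish both implications, with the forward (necessity) direction being routine and the converse carrying essentially all of the difficulty.

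For necessity, suppose $U$ is topologically isotopic to a Stein open subset $V\subset X$. A topological isotopy is by definition a homotopy of the inclusion through ($C^0$) embeddings, so in particular $U$ is homeomorphic to the image $V$. Since $V$ is a Stein surface, it satisfies the classical necessary conditions recalled before Eliashberg's Theorem~\ref{absn}; in particular it admits an exhausting Morse function all of whose critical points have index $\le n=2$. By Lemma~\ref{decomp}, such a function can be subdivided to present $V$ as the interior of a (Morse-ordered, locally finite) handlebody with all handle indices $\le 2$. Hence $U\cong V$ is homeomorphic to such a handlebody interior, as required.

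For sufficiency, assume $U\subset X$ is homeomorphic to the interior of a handlebody $H$ with all indices $\le 2$ (which I may treat as smooth, since $4$-dimensional topological handlebodies are uniquely smoothable). The inclusion endows $U$ with an almost-complex structure $J$, and I want a topological isotopy of the inclusion onto a Stein open subset. The first step is to invoke Theorem~\ref{abstop}, which produces an orientation-preserving homeomorphism from $U$ onto a Stein surface realizing precisely the homotopy class of $J$. This is the topological substitute for the smooth hypothesis of Theorem~\ref{ambi2}, which is generally unsatisfiable (as for $S^2\times\R^2$): it records that the sole smooth obstruction, namely the Legendrian framing condition on $2$-handles in Theorem~\ref{abs2}, disappears once we are allowed to change the smooth structure.

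The heart of the argument is to realize this abstract homeomorphism ambiently, by reworking the handle-by-handle construction underlying Theorem~\ref{ambi2} in the topological category. I would build the target Stein set inside $X$ inductively over the handles of $H$. The subhandlebody of $0$- and $1$-handles, and everything lying below the $2$-handles, can be made Stein and ambiently isotoped in $X$ by Eliashberg's construction, whose induction step is unobstructed in index $<2$. Each smooth $2$-handle is then replaced by a Casson handle, chosen so that the framing condition of Theorem~\ref{abs2} holds and the Stein structure extends across it. Since every Casson handle is homeomorphic to the open $2$-handle $D^2\times\R^2$ by Freedman's theorem, the resulting open subset $V\subset X$ is homeomorphic to $U$ and, by construction, is Stein.

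It remains---and this is the main obstacle---to upgrade the homeomorphism $U\cong V$ to a genuine \emph{topological isotopy of the inclusion} $U\hookrightarrow X$, not merely an abstract homeomorphism. The family is smooth and unproblematic over the part of $U$ below the $2$-handles; the delicate point is to interpolate across the Casson handles. Here I would use the Freedman-theoretic homeomorphism between a Casson handle and the standard open $2$-handle, arranged to be the identity near the attaching region, and assemble these with the smooth isotopy of the lower part into a globally continuous family of topological embeddings carrying the inclusion onto $V$. Controlling this family near the infinite towers of the Casson handles, so that the local pieces fit together into a single ambient topological isotopy, is precisely where the real work lies and where Freedman theory is indispensable.
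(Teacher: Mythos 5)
Your overall architecture matches the paper's: necessity via Lemma~\ref{decomp} applied to an exhausting plurisubharmonic Morse function, and sufficiency by running Eliashberg's induction ambiently, handling indices $0$ and $1$ directly and replacing each $2$-handle by a Casson handle, with Freedman's theorem identifying the result with $U$ up to homeomorphism. However, two points at the heart of the argument are missing. First, you cannot simply ``treat $H$ as smooth'': the abstract handlebody is uniquely smoothable, but the homeomorphism $\int H\to U$ need not respect the smooth structure that $U$ inherits from $X$, so the cores of the $2$-handles sit in $X$ only as \emph{topologically} embedded disks. Before Eliashberg's method can be applied one must replace these by suitably framed \emph{smoothly immersed} disks, and this is exactly where the work of Freedman and Quinn enters; the paper flags this explicitly for Theorem~\ref{ambitop}. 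Second, you invoke a Casson handle ``chosen so that the framing condition of Theorem~\ref{abs2} holds,'' but that is the conclusion, not a choice one can make: a single embedded core cannot achieve the required framing (the Thurston--Bennequin invariant can only be raised at the cost of double points). The actual mechanism is that the framing obstruction is a normal Euler number counted by double points of the core, so an immersed core absorbs it; each double point then contributes a spurious generator of $\pi_1$, which is killed by attaching further immersed disks at the next stage, and iterating produces the infinite tower. The Stein structure extends over the tower because each stage is attached along Legendrian curves whose contact framings now satisfy the condition of Theorem~\ref{abs2}. Without this mechanism the appearance of Casson handles is unmotivated and the claim that the Stein structure extends across them is unsupported.

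On the final step, your plan to assemble Freedman homeomorphisms of the individual Casson handles (rel attaching regions) with the smooth isotopy of the lower handles into a single topological isotopy of the inclusion is the right target but is asserted rather than argued; this is where the paper defers to \cite{JSG} and \cite{steintop} (essentially Theorem~\ref{handle}(c), which controls the point-set ends of the towers by interleaving embedded surfaces as in \cite{FQ}). Note also that the paper records a cleaner alternative route you could have taken: strengthen Theorem~\ref{abstop} so that the abstract Stein surface homeomorphic to $U$ \emph{smoothly embeds} in $U$ with the embedding topologically isotopic to $\id_U$; then the image is a smooth open subset whose induced almost-complex structure is homotopic to a Stein structure, and Theorem~\ref{ambi2} finishes the proof. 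That version pushes all of the Freedman-theoretic difficulty into a single embedding statement and avoids having to build the ambient isotopy by hand across the Casson handles.
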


\noindent This is essentially proven, in an expository manner, in \cite{JSG}. It is also explicitly given in \cite{steintop} as a corollary of a stronger theorem (essentially Theorem~\ref{handle}(c) below). It can be derived from Theorem~\ref{ambi2} and a strengthened form of Theorem~\ref{abstop}: The Stein surface associated by the latter theorem to the above $U$ can actually be assumed to smoothly embed in $U$, so that the embedding is topologically isotopic to the identity map on $U$, and Theorem~\ref{ambi2} immediately gives the required Stein open subset. Alternatively, Theorem~\ref{ambitop} can be used to prove Theorem~\ref{abstop}: The handlebody interior of the latter theorem can be easily given a complex structure homotopic to the given almost-complex structure. (For example, construct a suitable continuous map from the core 2-complex to a rational surface, then homotope it to an immersion and pull back the complex structure.) Theorem~\ref{ambitop} with $U=X$ completes the proof.

\begin{examples}
a) Any given smooth embedding $S^2\to \C^2$ has a tubular neighborhood diffeomorphic to $S^2\times\R^2$. This is topologically (but obviously not smoothly) isotopic to a Stein open subset. As in Example~\ref{abstract}(a), we can realize any positive integer $g$ as the minimal genus of the generator of such a domain of holomorphy (for any fixed isotopy class of embeddings of $S^2$). For more on Stein neighborhoods of isotoped embedded surfaces, see \cite{JSG}, \cite{steintop}.

b) More generally, let $U\subset\C^2$ be any open subset homeomorphic to the interior of a handlebody with all indices $\le 2$. Then $U$ is topologically isotopic to a Stein open subset $V$. By Example~\ref{R4}, there are uncountably many diffeomorphism types of Stein exotic $\R^4$s  in $\C^2-V$. We can connect any one of these to $V$ by a 1-handle in $\C^2$, preserving the Stein condition. (This requires a small amount of control of the boundaries of these subsets, but is not a problem here.) By the method of \cite{DF}, we can then distinguish uncountably many diffeomorphism types of these domains of holomorphy, all topologically isotopic (hence homeomorphic) to $U$ \cite{steintop}. Thus, if $U\subset\C^2$ is topologically isotopic to any domain of holomorphy, then there are uncountably many diffeomorphism types of such. Note that this construction preserves the minimal genus of every homology class of $V$, so in (a) above we have uncountably many diffeomorphism types for each embedding of $S^2$ and each $g>0$. One would expect this uncountability to be typical, with any complex surface $X$ in place of $\C^2$. This can be proven in many special cases (\cite{steintop}, see also Theorem~\ref{onion} below), but the general case seems out of range of current technology.
\end{examples}

\section{Eliashberg's method and 4-dimensional 2-handles} \label{proofs}

The theorems of the previous two sections can be thought of as manifestations of Gromov's {\em h-principle} \cite{Gr}, \cite{EM}. For many problems such as constructing complex or symplectic structures or smooth immersions, one first has to solve a topological problem at the level of bundle theory. The h-principle for such a problem asserts that a solution at the level of bundle theory guarantees a solution to the complete problem. This clearly fails for putting complex structures on closed manifolds: A solution at the level of bundles is an almost-complex structure, but many closed, almost-complex manifolds do not admit complex structures, and there seems to be no hope of reducing the gap to a topological criterion. In contrast, we have seen that the existence of Stein structures and domains of holomorphy are determined by topological criteria. We will now sketch the proofs of the theorems of the two previous sections, emphasizing the underlying topology for the h-principle, which is the source of the difficulties surrounding 4-dimensional 2-handles.

\begin{proof}[Proof of Eliashberg's Theorem~\ref{absn}]
We sketch a proof from a preliminary version of ~\cite{CE}. We are given an almost-complex manifold $(U,J)$ of complex dimension $n$, and an exhausting Morse function $\varphi$ on $U$ whose critical points all have index $\le n$. We inductively assume that for a certain regular value $a$, the almost-complex structure $J$ restricts to an actual complex structure on the compact submanifold $P=\varphi^{-1}[0,a]$. We also assume that on $P$, the function $\varphi$ is suitably compatible with $J$, i.e., it is a {\em (strictly) plurisubharmonic} (or {\em J-convex}) function. (The significance of this will be indicated later.) We wish to homotope $J$ relative to $P$ to replicate the induction hypotheses for a new regular value $b>a$, where for simplicity we assume $\varphi^{-1}[a,b]$ contains a unique critical point, whose index is $k$. Induction then homotopes $J$ to an honest complex structure on $U$ for which $\varphi$ is an exhausting plurisubharmonic function, and the latter condition guarantees the complex structure is Stein by a theorem of Grauert \cite{Gt}.

We now extract the bundle-theoretic data relevant for the induction step. We ascend to the level of the new regular value $b$ by attaching a handle $h= D^k\times D^{2n-k}$ to $P$ (along with a collar that does not affect the homotopy-theoretic data). At the level of homotopy theory, $h$ is given by its core $D=D^k\times\{0\}$, which is attached along its attaching sphere $S=\partial D$. The complete attaching map $f:\partial_-h= \partial D^k \times D^{2n-k}\to M= \partial P=\varphi^{-1}(a)$ is then determined up to isotopy by by its 1-jet along $S$. On the real hypersurface $M\subset U$, the almost-complex structure $J$ induces a unique field $\xi=TM\cap JTM$ of complex hyperplanes. Since the complex line bundle $(TU|M)/\xi$ is canonically trivialized by a normal vector field to $M$, the complex $(n-1)$-plane field $\xi\subset TM$ carries all of the bundle-theoretic information about $J$ along $M$. The handle $h$ also inherits an almost-complex structure $J_h$ from its inclusion in $U$, and along the attaching region $\partial_-h$, this induces a complex hyperplane field $\xi_h$ (mapping to $\xi$ complex linearly under $f$). The structure $J_h$ is determined up to homotopy rel $S$ by its restriction to the bundle $Th|D$. Thus, the essential data associated to gluing the almost-complex handle are $J_h$ on $Th|D$ and the embedding $f|S$, with the latter covered by a complex bundle map $\xi_h\to\xi$. (Note that this bundle map also carries the normal data determining $f$.)

To complete the proof, we must adjust our setup so that $J_h$ takes a standard form. The descending disk of any critical point of a plurisubharmonic Morse function is {\em totally real}, that is, its tangent bundle contains no complex lines. Thus, it is natural to define the standard complex $k$-handle to be a tubular neighborhood in $\C^n$ of $D^k\subset \R^k \subset \R^n \subset \C^n$, with the standard complex structure $i$. (This also clarifies why the indices of such critical points never exceed $n$.) The space of positively oriented complex vector space structures on $\R^{2n}$ is connected (since a complex basis for any such $J$, completed to a real basis using its $J$-image, is homotopic through real bases to the standard basis), so we may assume $J_h=i$ near the center point of $D$. Since $h$ is contractible, we can then find a homotopy $J_t$ on $h$ from $J_h$ to $i$.

\begin{lem}\label{hprin}
Unless $k=n=2$, there is an isotopy $f_t:\partial_-h\to M$ of the attaching map $f$, and a choice of homotopy $J_t$ on $h$ as above, such that for each $t$, the hyperplane field $\xi_t$ induced by $J_t$ on $\partial_-h$ is sent $J_t$-complex linearly onto $\xi$ by $df_t|S$.
\end{lem}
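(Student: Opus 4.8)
The plan is to recognize the desired condition as the statement that the core disk $D$ becomes \emph{totally real}, and to realize this via the h-principle for isotropic immersions. First I would translate the matching requirement into geometry. At the final time, where $J_1=i$ is standard, the condition that $df_1|S$ carry $\xi_1$ complex-linearly onto $\xi$ says exactly that the attaching sphere $S=\partial D^k$ is isotropic for the conformal symplectic structure that $J$ induces on the complex hyperplane field $\xi$, and that the complex framing of its normal data agrees with that of the standard handle; equivalently, the core $D$ is totally real along $S$. For the intervening times one carries the analogous condition with $J_t$ in place of $i$. Thus the lemma asserts that the given embedding $f|S$, together with the chosen homotopy of almost-complex structures, can be deformed so that $S$ becomes isotropic with the correct normal framing.

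Next I would produce the formal (bundle-level) solution. Because $S=S^{k-1}$ is a sphere of dimension $k-1\le n-1$ and each fiber $\xi_p\cong\C^{n-1}$ admits isotropic $(k-1)$-frames, the space of isotropic monomorphisms $TS\to\xi$ is connected (indeed highly connected in this range), so $df|S$ is homotopic through bundle monomorphisms to an isotropic one, and the complex bundle map $\xi_h\to\xi$ covering it exists by connectivity of the relevant frame bundles. This produces a formal isotropic embedding in the required homotopy class, tied to a homotopy $J_t$ from $J_h$ to $i$.

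I would then invoke Gromov's h-principle for isotropic immersions, which holds in all dimensions, to realize the formal tangential data by a genuine family of isotropic immersions carrying the homotopy $J_t$, and upgrade to embeddings. Since $\dim S=k-1$ satisfies $2(k-1)\le 2n-2<2n-1=\dim M$, a single isotropic immersion is generically embedded; to keep the whole one-parameter family embedded (connecting the original embedding $f$ to the target) one must remove the isolated double points that a generic family develops. This is where the dimension enters, and I expect it to be the main obstacle.

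The delicate case is the critical one, $k=n$, in which $S$ is Legendrian of the maximal isotropic dimension $n-1$ and its complex normal framing carries an integer invariant that must be matched to the standard handle. When $n\ge 3$ the hypersurface $M$ has dimension $2n-1\ge 5$, so the Whitney trick removes the double points arising in the family, and the extra room lets one adjust the framing freely in either direction; this yields the isotopy for all $k\le n$ with $n\ge 3$, as well as for every subcritical $k<n$. When $n=2$, however, $M$ is a contact $3$-manifold and $S$ is a Legendrian knot whose contact framing is the Thurston--Bennequin invariant, which is bounded above and can be changed only by decreasing it. Hence the required framing need not be attainable, the argument breaks down, and one is left precisely with the framing condition of Theorem~\ref{abs2}.
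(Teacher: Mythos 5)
Your overall strategy coincides with the paper's: reinterpret the condition as making $S$ isotropic (Legendrian when $k=n$) with matching complex normal data, obtain a formal solution, apply the h-principles of Gromov (isotropy) and Hirsch (the remaining bundle data) to get a regular homotopy $f_t$, observe that for $k<n$ the trace is generically embedded, and reduce the critical case $k=n$ to killing the double points of the trace $F(I\times S)\subset I\times M$. Up to that point the proposal matches the paper.

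The gap is at the one step that actually separates $n>2$ from $n=2$. The Whitney trick in $I\times M$ only cancels \emph{geometrically paired} double points of opposite sign; it does nothing if the signed count of double points of $F(I\times S)$ --- the integer obstruction to promoting the regular homotopy to an isotopy rel $t=0,1$ --- is nonzero. You assert that for $n\ge 3$ ``the extra room lets one adjust the framing freely in either direction,'' but you give no mechanism, and this is precisely the content that needs proof: a priori the same contact-geometric rigidity that pins down the sign of the double point in dimension $3$ (your Thurston--Bennequin observation) might persist in higher dimensions. The paper's argument is concrete: one appends a Legendrian regular homotopy modeled in a front projection by pushing a compact $(n-1)$-manifold $Q$ with boundary through an adjacent sheet; the resulting change in the obstruction is $\chi(Q)$. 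For $n-1\ge 2$ this realizes every integer, so the obstruction can be killed and the Whitney trick then applies (using the $C^0$-smallness of the homotopy to make Whitney circles nullhomotopic when $M$ is not simply connected); for $n-1=1$ a compact $1$-manifold has $\chi(Q)\ge 0$, which is exactly the one-sidedness you correctly identify at $n=2$. Your diagnosis of the failure at $n=2$ is right, but the positive claim for $n>2$ is unsupported as written, and the proof is incomplete without the Euler-characteristic computation (or an equivalent device) that changes the obstruction arbitrarily.
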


\noindent We can use $f_t$ to adjust the handle $h$ in $U$, and extend $J_t$ to a homotopy on $U$ rel $P$, deforming our entire setup rel $P$ so that $J_h=i$ along $D$ (for $n\ne2$). By a further adjustment, we can then arrange $h$ to be the standard complex $k$-handle, glued to $P$ holomorphically on a neighborhood of $S$ in $h$. (Here we do not require that $\partial_-h$ maps into $M$, only that its 1-jet along $S$ maps to that of $M$. This is because $M$ is $J$-convex while $\partial_-h$ is $J$-flat. Holomorphicity is then achieved by taking $f$ to be real-analytic on $\R^n$ near $S$, then complexifying.) Now $P\cup h$ is a complex manifold, and $D$ is a totally real and real-analytic submanifold intersecting $M$ standardly (along $S$ we have $JTD\subset TM$). With some additional hard work, Eliashberg produces a regular neighborhood $\hat P$ of such a subset $P\cup D\subset P\cup h$, for which the boundary is $J$-convex. This means that after an isotopy of $U$ fixing $P$ and sending $\partial \hat P$ to $\varphi^{-1}(b)$, we can assume that $\varphi|\hat P$ is plurisubharmonic. The induction step and proof are now completed by a proof of the lemma, which we discuss below.
\end{proof}

\begin{proof}[Proof of Theorem~\ref{ambin}]
Now we are given $U\subset X$, with a suitable Morse function on $U$, and $n\ne 2$. We apply the previous method. For each handle $h\subset U$, we obtain an isotopy $f_t:h\to U$ of the inclusion map (extending the isotopy of $\partial_-h$ given by Lemma~\ref{hprin}), and a homotopy $J_t$ of $J$ rel $P$ on $U$, so that $f_1^*J_1=i$ on $D$. This time, however, we are not allowed to homotope $J$ to construct our Stein open subset of $X$. Instead, we observe that the homotopy $f_t^*J$ agrees with $f_t^*J_t$ on $\xi_t|S$ and for $t=0$. We infer that on $Th|D$, the structure $f_1^*J$ is homotopic rel $\xi_1|S$ to $f_1^*J_1=i$. In particular, there is no homotopy-theoretic obstruction to making $D$ totally real by an isotopy of $f_1$ rel $S$. We can now invoke an h-principle of Gromov to conclude that such an isotopy exists, so we can assume $D$ is totally real and intersects $M$ standardly. After another ($C^1$-small) isotopy, we can assume $D$ is also real-analytic, so Eliashberg's work extends the plurisubharmonic function as before.
\end{proof}

\begin{proof}[Proof of Lemma~\ref{hprin}]
We again sketch the proof from \cite{CE}; details appear in \cite{FoS} Lemma 3.1. We would like to derive the lemma as a sort of h-principle. Since $M$ is a level set of a plurisubharmonic function, it follows that $\xi$ is a {\em contact structure} on $M$. Since the tangent spaces of $S$ lie in $\R^k\subset \C^n$, the fibers of $iTS$ are parallel to $i\R^k$, and so lie tangent to $\partial_-h$. Thus, $TS$ lies in the $i$-complex hyperplane field $\xi_1$, and so the embedding $f_1$ we are trying to construct must send $S$ to a manifold tangent to $\xi$ everywhere. Such a manifold is called {\em Legendrian} if $k=n$ and {\em isotropic} in general. The lemma now amounts to isotoping $f|S$ to an isotropic embedding while suitably controlling data about the almost-complex structures. By combining h-principles of Gromov (making a submanifold of a contact manifold isotropic) and Hirsch (for the homotopy and remaining data), one can at least find a regular {\em homotopy} $f_t$ with the right properties (for all $k,n$). If $k<n$, this will generically be an isotopy, and the lemma follows. When $k=n$, we can still assume $f_1$ is an embedding, but there will generically be finitely many values of $t$ for which $f_t$ is only an immersion. It now suffices to assume $k=n$ and to turn the given regular homotopy into an isotopy for $n\ne 2$.

Since the lemma is trivial for $n<2$, we turn to the next simplest case $n=2$, where the method is only partially successful. There is an obstruction in $\zed$ to turning the regular homotopy $f_t$ into an isotopy rel $t=0,1$, namely the signed number of double points in the surface $F(I\times S)\subset I\times M$, where $F(t,x)=(t,f_t(x))$. (We must change $F$ by a regular homotopy in order to preserve the bundle data.) We wish to kill the obstruction by appending an additional regular homotopy of the Legendrian embedding $f_1$. In order not to lose the Legendrian condition, we homotope through Legendrian immersions. Since every contact structure can be locally identified with the standard model in $\R^{2n-1}$, we can identify a neighborhood of some point of $f_1(S)$ in $M$ with a standard neighborhood in $\R^3$, so that the standard ``front'' projection to $\R^2$ appears as a cusp, as in Figure~\ref{yasha}(a).
\begin{figure}
\centerline{\epsfbox{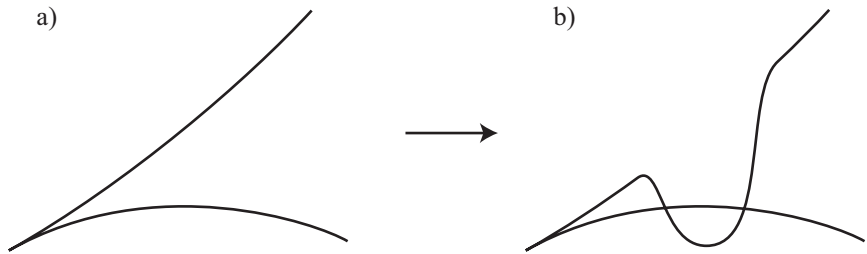}}
\caption{Changing the obstruction to attaching a Stein handle.}
\label{yasha}
\end{figure}
 (See e.g. \cite{GS}, \cite{OS} for more on the standard contact structure and front projections of Legendrian curves.) A regular homotopy of the front projection produces (b) of the figure. This homotopy lifts uniquely to a regular homotopy of Legendrian curves in $\R^3$. (Since negative slopes are always in front of positive slopes, (b) uniquely represents an embedded Legendrian curve in $\R^3$ obtained from (a) by a regular homotopy involving a Legendrian curve with a generic double point.) This local model shows how to change the obstruction by one unit, but unfortunately we cannot control the sign of this change (since the sign of the double point is predetermined through the underlying contact geometry). In fact, it is not possible to arbitrarily change the obstruction without losing control of the topological knot type of $f(S)$, and this is why Eliashberg's method fails for 4-dimensional 2-handles. We return to this phenomenon below.

For $n>2$, the argument works much better. We have the analogous obstruction (in $\zed$ or $\zed_2$, depending on the parity of $n$), but this time we can change it arbitrarily. The analogous front projection is obtained from Figure~\ref{yasha}(a) by crossing with extra dimensions (to obtain an $(n-1)$-manifold in $\R^n$). Instead of pushing an arc of the manifold through the adjacent sheet, we push through a compact $(n-1)$-manifold $Q$ with boundary. One can compute that the resulting change in the obstruction is just the Euler characteristic of $Q$, which can be any integer (whereas a compact 1-manifold must have nonnegative Euler characteristic). Thus, we can modify the regular homotopy to make the obstruction vanish when $n>2$. Now we can perturb $F$ to make its image embedded, by a version of the Whitney trick. (If $M$ is not simply connected, we use the fact that the homotopy is $C^0$-small, so we can assume the Whitney circles are sufficiently small that they are nullhomotopic.) The resulting isotopy satisfies the lemma (and exists unless $k=n=2$).
\end{proof}

 Before extending the method to the 4-dimensional case, Theorem~\ref{abs2}, we clarify terminology. A 4-dimensional compact handlebody with all indices $\le 1$ admits a unique (tight) contact structure on its boundary \cite{ball}. A Legendrian knot in a contact 3-manifold has an induced {\em contact framing}, determined by a vector field in $\xi$ transverse to the knot. In the special case of the attaching circle $S$ in the standard complex 2-handle in $\C^2$, this vector field is just $i\tau$ for a tangent vector field $\tau$ to $S$ in $\R^2$. Since $\tau$ has winding number 1 in $\R^2$, it follows that the contact framing of $S$ differs by one twist from the normal framing to $D$ in the handle. We conclude that for $n=2$, any holomorphic 2-handle must be attached along a Legendrian knot with framing obtained from its contact framing by adding a (left) twist. This is precisely the condition given in Theorem~\ref{abs2}.

\begin{proof}[Proof of Theorem~\ref{abs2}]
When $n=2$, the proof of Theorem~\ref{absn} only fails for 2-handles. The given condition on the framings allows us to bypass Lemma~\ref{hprin}, to attach standard complex 2-handles with $f$ preserving $\xi$ along $S$. These can be arranged to preserve the Stein condition as before.
\end{proof}

\begin{proof}[Proof of Theorem~\ref{ambi2}]
In the ambient case, the main idea for the higher-dimensional Theorem~\ref{ambin} still works, although the details are more delicate. We must incorporate the extra data about the Stein structure homotopic to $J|U$, in order to bypass Lemma~\ref{hprin} for the 2-handles. However, it is too much to expect that the resulting Stein open subset will be holomorphically equivalent to the hypothesized Stein surface, so we must give up some information, relying on the contact structures to carry enough data. For a complete proof, see \cite{steindiff}.
\end{proof}

To understand the context into which Theorem~\ref{abs2} fits, it is helpful to analyze Lemma~\ref{hprin} when $n=2$. In this case, the obstruction can be interpreted as a mismatch between framings. Note that in Figure~\ref{yasha}, for example, we can return from (b) to (a) by an isotopy (although not through Legendrian knots). The resulting regular homotopy from (a) to (b) and back carries along a normal framing, but returns it with two extra twists. (Draw a curve parallel to the one in (a), carry the pair through the regular homotopy, and observe directly that two left twists appear.) The resulting framing change can be interpreted as a normal Euler class, for example on the immersed torus $F(I\times S)$ in $S^1\times M$ that is homologous to $S^1\times f(S)$. The phenomenon now reduces to the observation that for a closed, orientable, generically immersed surface, its normal Euler number differs from the self-intersection number of its homology class by twice the signed number of double points (which follows easily from the definitions, e.g. \cite{GS} Exercise~6.1.1(a) and solution). Thus, the framing condition in Theorem~\ref{abs2} corresponds to suitably killing the obstruction in Lemma~\ref{hprin}. While we cannot apply the Whitney trick as in that lemma, it remains true that attaching circles can be $C^0$-small isotoped to become Legendrian (although we lose control of the almost-complex structures on the corresponding handles). We can then change the obstruction arbitrarily in one direction but not in the other. (Figure~\ref{yasha} changes the obstruction by 2, but it can also be changed by 1 by adding a zig-zag.) To apply Theorem~\ref{abs2}, the problem then becomes to draw a handle diagram for $U$, where the attaching circles are Legendrian with sufficiently large contact framings (or {\em Thurston-Bennequin invariants}, in the language of Legendrian knot theory). This is much harder than applying the higher-dimensional theory, but sometimes still tractable. See e.g. \cite{Ann}, \cite{GS} or \cite{OS} for examples and applications. Once a Legendrian handle diagram is specified for $U$, the homotopy class of the corresponding Stein structure is determined by the {\em rotation numbers} of the Legendrian knots. These are defined as relative Chern classes but can be computed combinatorially in a diagram. A knot can be made Legendrian for only finitely many choices of the rotation number, once the Thurston-Bennequin invariant is specified.  This reflects the fact that a 4-manifold may be Stein realizing some, but not all, choices of the homotopy class of the almost-complex structure.

\begin{proof}[Proof of Theorems~\ref{abstop} and \ref{ambitop}]
In either case, the difficulty is that while we can attach the 2-handles to Legendrian knots with the correct rotation numbers, we cannot do so with the framings required for a Stein structure. (That is, the Thurston-Bennequin invariants will be too negative, so that the contact framings of the Legendrian knots will be far from the framings of the 2-handles.) As we have seen, however, these obstructions can be interpreted as normal Euler numbers, which suitably count double points. We can kill the obstructions by replacing the core disks of the 2-handles by immersed disks. (In the case of Theorem~\ref{ambitop}, the cores may only be topologically embedded, in which case it takes work of Freedman and Quinn to find suitable smoothly immersed disks.) We now obtain a Stein structure, but on the wrong manifold --- each double point of an immersed disk contributes a new generator to the fundamental group. We can kill these generators by adding 2-handles, recovering $U$ but losing the Stein structure. If we instead allow double points in the new 2-handles, we recover the Stein structure but again have extra fundamental group. We inductively kill the extra generators at each stage by adding immersed disks at the next stage, obtaining an infinite union that inherits a Stein structure. This procedure replaces each 2-handle by an infinite tower called a {\em Casson handle}. These Casson handles are simply connected (since any loop lies in a finite subtower, so it is nullhomotopic at the next stage). In fact, according to Freedman~\cite{F}, every Casson handle is homeomorphic to an open 2-handle $D^2\times\R^2$, so our final Stein surface is homeomorphic to $U$. For further details, see \cite{Ann} and \cite{JSG}, respectively.
\end{proof}

\section{Stein handlebodies and pseudoconvex complexes}\label{Onion}

We have seen that for some of our purposes, the Morse and handlebody viewpoints are interchangeable, but for other purposes, one viewpoint is more natural than the other. For 4-manifolds with infinite topology, for example, Theorem~\ref{abs2} is most easily stated using handlebodies. In this section, we adapt the theorems of Section~\ref{Ambient} to the setting of embedded handlebodies, which allows us to use a sharper notion of isotopy (namely {\em ambient} isotopy). We also introduce a third viewpoint, focusing on the totally real CW-complex forming the core of the handlebody. We show that suitably embedded CW-complexes can be isotoped to become totally real and pseudoconvex in the sense of being nested intersections of Stein open subsets. This is particularly striking in the 4-dimensional topological setting, where the resulting uncountable Stein neighborhood systems frequently realize uncountably many diffeomorphism types, even though the neighborhoods are all homeomorphic relative to the embedded complex.

Throughout this section, we rely on informal terminology, hypothesizing that Stein manifolds are made by a particular method. When we construct a Stein manifold, it is clear what method we have used, but if the manifold is given to us in some other way, it is not clear whether the hypothesis applies. In \cite{steindiff} and \cite{steintop} we use more precise and general terminology, with definitions that are currently too cumbersome to state here. Instead, we try to indicate the most important points. For example, a Stein manifold constructed by Eliashberg's method is assumed to be exhibited as the interior of a handlebody with pseudoconvex boundary, and in the ambient setting, this handlebody is embedded. Thus, we are given a partial collaring at infinity by pseudoconvex hypersurfaces, extending to a smooth mapping cylinder structure surrounding a CW-complex core that we can assume is smooth and totally real. The subhandlebodies inherit similar structure.

A smooth (or topological) {\em ambient isotopy} of a manifold $X$ is an isotopy of the identity on $X$ through diffeomorphisms (or homeomorphisms) of $X$. An isotopy of an open subset $U$ of $X$ need not extend to an ambient isotopy of $X$, because the {\em end} of $U$ (the part outside large compact subsets of $U$) can be troublesome. For example, $\C-\{0\}\subset \C$ is isotopic, but not ambiently isotopic, to the complement of a disk, and an arbitrary $X$ is not ambiently isotopic to any proper subset of itself. Thus, Theorem~\ref{ambitop} implies $\CP^1\times \C$ is topologically isotopic inside itself to a Stein surface, but not ambiently. We fix this problem by replacing open subsets by properly embedded handlebodies, which are closed subsets. We obtain the following analog of the main theorems of Section~\ref{Ambient}.

\begin{thm} \label{handle}
Let $H$ be a smooth, $2n$-dimensional handlebody with all handles of index $\le n$.
\begin{itemize}
\item[a)] If $n\ne 2$, then every smooth embedding of $H$ into a complex $n$-manifold $X$ is smoothly isotopic (ambiently if the embedding is proper) to one for which the image has Stein interior and a totally real core. This image is made by Eliashberg's method from the given handle decomposition.
\item[b)] If $n=2$, the same holds for embeddings for which the pulled-back almost-complex structure is homotopic to a Stein structure on $\int H$ made from the given handle decomposition.
\item[c)] If $n=2$, then every topological embedding is topologically isotopic to one whose image has Stein interior and a core that, except for one point on each 2-handle core, is smooth and totally real. If the embedding is proper and the image of $\partial H$ is flat (as defined below), then the isotopy is ambient.
\end{itemize}
\end{thm}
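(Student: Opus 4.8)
The plan is to rerun Eliashberg's inductive handle-attaching construction, as sketched in the proofs of Theorems~\ref{ambin}, \ref{ambi2} and \ref{ambitop}, but bookkeeping the \emph{closed} embedded handlebody and its totally real core throughout, and then to upgrade the resulting isotopy of $\int H$ to an isotopy of $H$ itself and, in the proper case, to an ambient isotopy of $X$. I would organize the induction along the index filtration of the core CW-complex $K\subset H$, since $H$ is the mapping cylinder of a map $\partial H\to K$ and hence a tubular neighborhood of $K$; isotoping $H$ so that its image has Stein interior and totally real core is thus equivalent to isotoping $K$ to become totally real while extending the pseudoconvex boundary hypersurface over each newly attached handle. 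Because the handles are attached in standard complex form, the whole closed handlebody is carried along, so the clause that the image is ``made by Eliashberg's method'' is automatic from the construction rather than an additional requirement.

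At the inductive stage for part (a) I have an embedded subhandlebody $P$ with pseudoconvex boundary and totally real core, and I attach the next $k$-handle. Lemma~\ref{hprin} (valid since $n\ne 2$) isotopes the attaching sphere to be isotropic with the correct bundle data, Gromov's h-principle makes the core disk totally real exactly as in the proof of Theorem~\ref{ambin}, and Eliashberg's regular-neighborhood construction extends the pseudoconvex hypersurface over the handle. Parts (b) and (c) modify only the $2$-handle step. In (b) the hypothesized homotopy of the pulled-back almost-complex structure to a Stein structure on $\int H$ supplies the contact and framing data needed to bypass Lemma~\ref{hprin}, as in the proof of Theorem~\ref{ambi2}. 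In (c) the (too negative) framing obstruction is killed by replacing each $2$-handle core with an immersed disk---smoothly immersed via Freedman--Quinn when the core is only topologically embedded---and inductively building a Casson handle, which is homeomorphic to an open $2$-handle by Freedman. The Casson tower accumulates at a single point on each original $2$-handle core, which accounts for the one non-smooth point per $2$-handle in the statement of (c); away from it the core remains smooth and totally real.

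The substantive new work, and the step I expect to be the main obstacle, is upgrading these isotopies of $\int H$ (which is essentially all that Theorems~\ref{ambin}--\ref{ambitop} provide directly) to isotopies of the closed handlebody and, under properness, to ambient isotopies of $X$. Since $H$ is locally finite and Morse-ordered, I would arrange the handle-by-handle isotopies to be supported in a locally finite family of compact sets exhausting the image, so that the smooth isotopy extension theorem applies on each compact piece and the extensions concatenate into a single ambient isotopy; properness is exactly what guarantees that the image is closed and that the supports stay locally finite near the end. In the topological case (c) the isotopy extension theorem requires local flatness, which is precisely why I must assume the image of $\partial H$ is flat: flatness supplies the bicollar needed to invoke the Edwards--Kirby topological isotopy extension theorem. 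The delicate point that separates this theorem from its open-subset predecessors is this control at infinity---ensuring that the infinitely many handle-attachment isotopies do not accumulate badly, and, in (c), that the Casson-handle replacements can be carried out with supports shrinking toward the end so that the whole sequence assembles into one ambient isotopy.
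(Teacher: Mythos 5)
Your treatment of parts (a) and (b) matches the paper's: rerun the inductive handle-attachment, note that each handle has a compact smooth outer boundary so the Isotopy Extension Theorem applies at each stage, and use Morse-ordering and local finiteness so that near any point the isotopies stabilize after finitely many stages and hence have a well-defined limit on $H$ (and on $X$ when the embedding is proper, which is also what makes the limiting ambient maps surjective). That part of your plan is sound and is essentially the argument given.

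Part (c), however, has a genuine gap. You propose to replace each $2$-handle core by an immersed disk and build an ordinary Casson handle, invoking Freedman's theorem that a Casson handle is homeomorphic to $D^2\times\R^2$. That controls the \emph{interior}, but the theorem you are proving is about a closed, embedded handlebody $H$ with boundary $\partial H$ whose image must again be a (flat) topological $3$-manifold, and whose ambient isotopy class you must control. The point-set frontier of a typically embedded Casson handle is \emph{not} a manifold, so the naive replacement does not produce an embedded handlebody at all, and there is nothing to which the Edwards--Kirby isotopy extension theorem could be applied. The actual proof has to interleave many layers of embedded surfaces between the layers of immersed disks, producing the more delicate infinite towers of Freedman--Quinn whose point-set boundaries \emph{are} $3$-manifolds (raising the genus of a surface plays the same role as adding positive double points when constructing the Stein structure). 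The single non-smooth point on each $2$-handle core is then a byproduct of Freedman's methods applied to these towers, not of a Casson tower ``accumulating at a point.'' Without this modification, the conclusion of (c) concerning the embedded handlebody, its flat boundary, and the ambient isotopy cannot be reached.
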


\begin{proof} For (a) and (b), the discussion of Sections~\ref{Ambient} and \ref{proofs} applies with little change, so in particular, (b) is proved in \cite{steindiff} and (a) follows from \cite{E} or \cite{FoS}. One merely needs an additional glance at the proof of Theorem~\ref{ambin} in Section~\ref{proofs} to see that at each stage of the induction, the constructed isotopy can be assumed to be ambient. (Since each handle of $H$ has a smooth, compact outer boundary, the Isotopy Extension Theorem applies.) These isotopies have a well-defined limit on $H$, as well as on $X$ if the embedding is proper, since each point in the domain has a neighborhood on which the isotopies all agree after finitely many stages. (Each map of the limiting isotopy on $X$ is surjective since it is proper. This follows since $H$ is Morse-ordered, so only finitely many handles are stacked above a given handle, and since each map sends each subhandlebody of $H$ into itself.)

The proof of (c) is given in \cite{steintop}. The main new difficulty is that the end of a Casson handle is pathological --- the point-set boundary of a typically embedded Casson handle cannot be a manifold. To address this, we interleave many layers of embedded surfaces between the layers of immersed disks, to obtain the more subtle infinite towers of \cite{FQ}, whose point-set boundaries are 3-manifolds. (Increasing the genus of a surface has the same effect as introducing positive double points when we construct Stein surfaces.) Core disks that are smooth except at one point are a byproduct of Freedman's methods, and in our setting, the smooth parts are also totally real. The flatness hypothesis means that the embedding of $\partial H$ can be extended to an embedding of $\R \times \partial H$. This rules out wild behavior analogous to Alexander's Horned Sphere, and for proper embeddings, it is sufficient to guarantee that the isotopy is ambient. (In \cite{steintop}, properness is replaced by a weaker condition, allowing us to also consider embeddings of infinite handlebodies into regions with compact closure.)
\end{proof}

Since the Stein manifolds of (a) and (b) above are constructed by Eliashberg's method, the resulting boundaries are pseudoconvex hypersurfaces in $X$. (Here we use ``boundary" in the sense of manifolds, which only corresponds to the point-set sense if the embedding is proper.) In contrast, the boundaries obtained in (c) are only topological (flat) 3-manifolds. These will be compact if $H$ is, but may not be smoothable in $X$ by any topological isotopy. It seems reasonable to introduce a notion of ``topological pseudoconvexity'' to describe such 3-manifolds bounding Stein open subsets.

Theorem~\ref{handle} shows that being made by Eliashberg's method is a weak hypothesis --- in fact, every Stein manifold satisfies this hypothesis after a smooth isotopy within itself:

\begin{cor} \label{structure}
Let $U$ be a Stein manifold. Then $U$ is smoothly isotopic within itself to a Stein open subset that is the interior of a smoothly and properly embedded handlebody $H\subset U$ constructed by Eliashberg's method. In particular, the core of $H$ is a smooth, totally real $n$-complex ($n=\dim_\C U$). This can be assumed to be fixed by the given isotopy. Furthermore, $U-\int H$ is diffeomorphic to $[0,1) \times \partial H$.
\end{cor}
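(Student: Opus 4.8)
The plan is to reduce the corollary to an application of Theorem~\ref{handle}, using the fact that $U$ is itself a Stein manifold to supply both the abstract handle decomposition and the ambient complex structure simultaneously. First I would invoke the classical theory of Stein manifolds: $U$ admits an exhausting plurisubharmonic Morse function $\varphi:U\to[0,\infty)$ whose critical points all have index $\le n=\dim_\C U$. By Lemma~\ref{decomp} of the Appendix, I subdivide $\varphi$ so that collapsing the collars identifies $U$ with the interior of a Morse-ordered, locally finite handlebody $H_0$ with all handles of index $\le n$; this provides the abstract handle decomposition needed to apply Theorem~\ref{handle}. The key observation is that $U$, regarded as an open subset of the complex manifold $X=U$, carries the tautological embedding $H_0\hookrightarrow U$, and the pulled-back almost-complex structure is \emph{already} the given Stein structure. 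Thus when $n=2$ the homotopy hypothesis of Theorem~\ref{handle}(b) is satisfied trivially (the induced structure equals a Stein structure on the nose), and for $n\ne 2$ we are in the situation of Theorem~\ref{handle}(a).

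Next I would apply Theorem~\ref{handle}(a) or (b) to this embedding. Since the handlebody $H_0$ is properly embedded in $U$ (its interior exhausts $U$), the theorem yields a smooth \emph{ambient} isotopy of $U$ carrying $H_0$ to an embedded handlebody $H$ whose interior is Stein and whose core is a smooth, totally real $n$-complex, with $H$ constructed by Eliashberg's method. Because the isotopy is ambient and $H$ is properly embedded, $\int H$ is a Stein open subset of $U$ that is the image of $U$ (or an exhausting piece of it) under a self-isotopy of $U$, which is exactly the assertion that $U$ is smoothly isotopic \emph{within itself} to $\int H$. The statement that the core can be assumed fixed by the isotopy follows by arranging, at the initial stage, that the totally real core already sits inside $H_0$; Eliashberg's method moves the handles to standard complex form while only adjusting them rel the cores, so the isotopy can be taken to fix the core $n$-complex pointwise.

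Finally, the collar statement $U-\int H\cong[0,1)\times\partial H$ follows from the mapping-cylinder/collaring structure discussed in Section~\ref{Abstract}: since $H$ has no handles of index $>n$ and in particular no top-dimensional handles, $\int H - K$ is a product $\partial H\times(0,1]$, and the proper embedding together with the ambient isotopy identifies the complementary region $U-\int H$ with the outward collar $[0,1)\times\partial H$ given by the pseudoconvex hypersurfaces collaring $H$ at infinity. I expect the main obstacle to be the claim that the core $n$-complex can be held \emph{fixed} throughout the isotopy: one must check that the inductive construction in the proof of Theorem~\ref{ambin}, which deforms each handle to standard complex form and makes its core totally real, can be set up so that the cores are totally real from the outset and are never subsequently moved. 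This requires noting that the isotopies $f_t$ produced by Lemma~\ref{hprin} (and the Gromov h-principle in the ambient setting) can be taken rel the core disks once those disks are already totally real, so that the well-defined limiting ambient isotopy fixes $K$ pointwise while still carrying $H_0$ to an Eliashberg-standard $H$.
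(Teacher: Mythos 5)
Your overall strategy --- decompose $U$ as a handlebody interior via the Appendix and then feed the resulting embedding into Theorem~\ref{handle} --- matches the paper's, but there are two gaps, one of them substantive. The serious one is the case $n=2$: you apply Lemma~\ref{decomp} and then assert that the hypothesis of Theorem~\ref{handle}(b) is ``satisfied trivially'' because the pulled-back almost-complex structure equals the given Stein structure. But Theorem~\ref{handle}(b) requires that structure to be homotopic to a Stein structure \emph{made from the given handle decomposition}, i.e., one produced by Eliashberg's method from that decomposition, which forces every 2-handle to be attached along a Legendrian knot with the framing condition of Theorem~\ref{abs2}. Lemma~\ref{decomp} gives no control over the attaching circles, so there is no reason the decomposition it produces admits any Stein structure at all, let alone one homotopic to the original. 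This is exactly what the much harder Lemma~\ref{steindec} is for: it produces a handle decomposition of a given Stein surface whose 2-handles satisfy the Legendrian framing condition and whose induced Stein structure is homotopic to the original. The paper's proof invokes Lemma~\ref{decomp} for $n\ne2$ and Lemma~\ref{steindec} for $n=2$; as written, your argument does not go through for Stein surfaces.

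The second gap is the claim that $H_0$ is ``properly embedded in $U$ (its interior exhausts $U$).'' The diffeomorphism from Lemma~\ref{decomp} identifies $U$ with $\int H_0$, so $\partial H_0$ does not land in $U$ at all; $H_0$ is not embedded in $U$ as a closed subset with boundary, and the conclusion $U-\int H\cong[0,1)\times\partial H$ could not hold for $H=H_0$. The paper fixes this by first shrinking the handlebody into its own interior by the obvious proper isotopy and applying Theorem~\ref{handle} to the image of the shrunken copy; the resulting ambient isotopy then yields the properly embedded $H$ whose complement of the interior is the required collar. You need this shrinking step before Theorem~\ref{handle} applies and before the collar statement even makes sense.
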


\begin{proof} By Lemma~\ref{decomp} (or \ref{steindec} if $n=2$), there is a diffeomorphism $\varphi: \int H'\to U$ from a handlebody interior. Shrink $H'$ into its own interior by the obvious proper isotopy, and apply Theorem~\ref{handle} to the image in $U$ of the resulting $H''$. The consequent ambient isotopy changes $\varphi$ into a new diffeomorphism for which the image $H$ of $H''$ has the required properties.
\end{proof}

We can reinterpret Theorem~\ref{handle} to focus on the cores of the handlebodies. Call an embedded CW-complex {\em tame\/} (in either the topological or smooth setting) if we can thicken it to a handlebody, i.e., it is the core of some embedded handlebody whose handles correspond to cells in the obvious way. (For example, the Fox-Artin Arc in $\R^3$ is not a tame CW-complex, although it could be considered a smooth 1-complex with two 0-cells and an embedded open 1-cell.) It is natural to ask which embedded complexes (up to isotopy) can be thickened to Stein handlebodies as above. We answer the question and observe that the resulting complexes are pseudoconvex in the sense of being nested intersections of Stein neighborhoods.

\begin{cor} \label{ncplx}
Let $K$ be a tamely embedded $n$-complex in a complex $n$-manifold $X$.
\begin{itemize}
\item[a)] In the smooth setting, if $n\ne 2$, then after a $C^0$-small smooth isotopy (smoothly ambient if $K$ is a closed subset, and topologically ambient in general), $K$ is totally real, and is the core of a smoothly embedded handlebody constructed by Eliashberg's method. Thus, $K$ (after isotopy) can be described as a nested intersection of Stein neighborhoods, smoothly isotopic to each other $\rel K$ (ambiently as before), forming a neighborhood system of $K$ if the latter is compact.
\item[b)] If $n=2$, the same holds provided that $K$ thickens to a smoothly embedded handlebody satisfying the hypotheses of Theorem~\ref{abs2}, for which the abstract Stein structure of that theorem is homotopic to the almost-complex structure inherited from the embedding in $X$.
\item[c)] If $n=2$ and $K$ is tamely topologically embedded, then there is a $C^0$-small topological ambient isotopy after which, except on a finite subset of each 2-cell, $K$ is smooth and totally real, and $K$ is the core of a topologically embedded handlebody with Stein interior. In fact, $K$ is a nested intersection of such Stein neighborhoods, topologically ambiently isotopic to each other $\rel K$, forming a neighborhood system of $K$ if the latter is compact.
\end{itemize}
\end{cor}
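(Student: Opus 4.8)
The plan is to read each part of the statement off the corresponding part of Theorem~\ref{handle}, rephrased in terms of the core instead of the handlebody. Since $K$ is tame, by definition it is the core of an embedded handlebody $H$ whose handles correspond to the cells of $K$ and hence have index $\le n$. Because the indices are $\le n<2n$, the handlebody carries the mapping-cylinder structure $H-K\cong\partial H\times(0,1]$ described in Section~\ref{Abstract}; sliding inward along this collar replaces $H$ by an arbitrarily thin thickening of $K$ lying in a preassigned small neighborhood, with the same core. Working with such a thin $H$ is what will make the resulting isotopy of $K$ be $C^0$-small, since the isotopies underlying Theorem~\ref{handle} --- the $C^0$-small homotopies of Lemma~\ref{hprin} together with the $C^0$-small, ambient isotopies produced by the h-principles of Gromov and Hirsch --- keep the moving handlebody inside a small neighborhood of its initial position.

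I would then apply the relevant part of Theorem~\ref{handle} to the thin embedded $H$: part~(a) when $n\ne2$, part~(b) when $n=2$ under the hypothesis that the pulled-back almost-complex structure is homotopic to the Stein structure of Theorem~\ref{abs2}, and part~(c) in the topological category. In each case the theorem supplies an isotopy of the embedding after which $\int H$ is Stein by Eliashberg's method and the core is totally real --- smooth and totally real off a finite subset of each $2$-cell in case~(c). Restricting this isotopy to the core gives exactly the asserted isotopy of $K$. The ambient refinements are automatic: when $K$ is a closed subset the thin $H$ may be taken proper (with $\partial H$ flat in case~(c)), so the Isotopy Extension Theorem makes the isotopy smoothly (resp.\ topologically) ambient as in Theorem~\ref{handle}; in the non-proper case the isotopy is still $C^0$-small, and a topological isotopy-extension argument promotes it to a topologically ambient one.

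It remains to exhibit $K$ as a nested intersection of Stein neighborhoods, which in cases~(a) and~(b) is a consequence of total reality. Once the core has been made totally real it acquires a fundamental system of Stein neighborhoods: near $K$ there is a nonnegative strictly plurisubharmonic function $\rho$ vanishing exactly on $K$, and for small $s$ the sublevel sets $N_s=\{\rho<s\}$ are relatively compact, strictly pseudoconvex, hence Stein, and shrink to $K$ as $s\to0$. This is precisely the pseudoconvex collaring and mapping-cylinder structure packaged into Eliashberg's method, so each $N_s$ is a handlebody with common core $K$ whose subhandlebodies inherit the same structure. The family is nested with $\bigcap_s N_s=K$; the downward gradient flow of $\rho$ (equivalently, radial rescaling of the collar) carries the $N_s$ to one another while fixing $K$, so they are mutually isotopic $\rel K$, ambiently when $K$ is closed; and if $K$ is compact, $\bigcap_s N_s=K$ forces the $N_s$ to form a neighborhood system.

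The hard part will be case~(c). There the core disks of the $2$-handles can only be immersed --- by Freedman--Quinn, only topologically so --- and replacing the $2$-handles by Casson handles produces a genuinely pathological end: the point-set boundary of a typically embedded Casson handle is not a manifold, so the smooth argument of the previous paragraph, which manufactured pseudoconvex hypersurfaces shrinking to $K$ from the total reality of a smooth core, has no direct analog. Following \cite{steintop}, the remedy is to interleave layers of embedded surfaces between the immersed-disk stages, building the finer infinite towers of \cite{FQ} whose point-set boundaries are flat $3$-manifolds; these supply the nested topological Stein neighborhoods and account for the single singular point on each $2$-cell of the core. Arranging this end carefully enough to obtain an honest neighborhood system of the compact complex $K$, together with a $C^0$-small topologically ambient isotopy and without invoking the full properness-and-flatness hypothesis attached to Theorem~\ref{handle}(c), is the principal technical obstacle, and is exactly the point at which the work of \cite{steintop} is required.
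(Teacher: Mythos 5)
Your overall strategy---thicken $K$ to a handlebody, apply the appropriate part of Theorem~\ref{handle}, and argue that thin handles force the isotopy to be $C^0$-small---is exactly the paper's, and your identification of case~(c)'s neighborhood system as the part requiring the hard work of \cite{steintop} (Theorem~\ref{onion}) is also correct. But three steps are handled too loosely. First, the claim that the isotopy is \emph{topologically ambient in general} (i.e.\ even when $K$ is not a closed subset of $X$) cannot be obtained from ``$C^0$-small plus a topological isotopy-extension argument'': there is no isotopy extension theorem for non-proper embeddings, and a uniformly small isotopy of a non-closed set need not extend (its track can accumulate on the frontier). The paper's fix is to make the isotopy $\epsilon$-small for a \emph{continuously varying} $\epsilon$ that vanishes outside a neighborhood within which $K$ is closed, so the isotopy extends by the identity. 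Second, in case~(c) thinness of $H$ alone does not control how far the core itself moves under the isotopy of Theorem~\ref{handle}(c); the paper first subdivides $K$ sufficiently and then uses the fact that the isotopy sends each (now small) subhandlebody into itself, so that each point travels only within a small subhandlebody.

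Third, your route to the nested Stein neighborhoods in (a) and (b)---a nonnegative strictly plurisubharmonic $\rho$ vanishing exactly on $K$ with Stein sublevel sets---is asserting for a totally real CW-\emph{complex} a fact that is classical only for totally real \emph{submanifolds}. At the points where cells of different dimensions meet, producing such a function (equivalently, a pseudoconvex boundary for an arbitrarily small regular neighborhood) is precisely the ``additional hard work'' in Eliashberg's induction step, not a consequence of total reality alone; note also that the exhausting plurisubharmonic function Eliashberg's method does produce on $\int H$ has sublevel sets exhausting $\int H$, not shrinking to $K$. The paper instead obtains the neighborhood system by \emph{repeatedly applying Eliashberg's construction} to thicken the totally real $K$ with smaller and smaller handles. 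Your analytic alternative is morally reasonable but would need a genuine argument (or reference) at the cell interfaces, and your parenthetical ``relatively compact'' sublevel sets silently assumes $K$ compact.
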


\noindent Note that in (c), $K$ may not be topologically ambiently isotopic to any smoothly tame complex, and even if it is, there will typically be obstructions to making it totally real. While a smooth 2-cell can always be made totally real except on a finite set, the remaining complex points may obstruct the existence of Stein neighborhoods. For example, $\CP^1\times\{0\}\subset \CP^1\times\C$ is not smoothly isotopic to any complex with a Stein neighborhood, but the theorem gives a topological ambient isotopy to a topological sphere with a Stein neighborhood system.

\begin{proof} Thicken $K$ to a handlebody and apply Theorem~\ref{handle}. In the smooth setting, Eliashberg's method allows the isotopy to be $C^0$-small (once the handles are chosen sufficiently thin). To arrange this in (c), first subdivide $K$ sufficiently, and use the fact that the isotopy of Theorem~\ref{handle} sends each subhandlebody into itself. In either case, we can take the isotopy to be $\epsilon$-small, where $\epsilon$ varies continuously. If $\epsilon=0$ outside a neighborhood within which $K$ is closed, the isotopy will be topologically ambient.  For (a) and (b), the neighborhood system is obtained by repeatedly applying Eliashberg's construction to thicken the given totally real $K$. The neighborhood system in (c) follows from Theorem~\ref{onion} below.
\end{proof}

Eliashberg's method should actually provide more than just a totally real core with a Stein neighborhood system. Every smooth handlebody $H$ whose handles all have index $<\dim H$ admits a smooth mapping cylinder structure, so a map $\psi: [0,1]\times \partial H\to H$ that is the identity on $\{ 1\}\times \partial H$, sends $\{ 0\}\times \partial H$ onto the core $K$ of $H$, and restricts to a diffeomorphism $(0,1]\times \partial H\to H-K$. For each $\sigma\in (0,1]$, let $U_\sigma=\psi( [0,\sigma)\times \partial H)$. When $H$ is compact, these form a neighborhood system of $K$ --- i.e., every neighborhood of $K$ contains some $U_\sigma$.  By choosing Eliashberg's handles to lie in 1-parameter nested families intersecting only in the core, it should be possible to control the construction to inductively arrange the resulting Stein manifold $U_1=\int H$ so that every $U_\sigma$ is also Stein. We would then have a nested family of Stein neighborhoods of the totally real $K$, parametrized by $(0,1]$, with any two neighborhoods smoothly isotopic rel $K$ (ambiently in $H$ for $\sigma<1$). While this has not yet been been worked out in detail, it may be discussed further in \cite{steindiff} or \cite{steintop}. In the 4-dimensional topological setting, we can no longer expect such nice structure, since $\int H-K$ will no longer inherit a smooth product structure. (Even when $H$ is compact, the Stein surface homeomorphic to $\int H$ will not usually be diffeomorphic to the interior of a compact manifold.) Surprisingly, however, we can still use Freedman theory to construct a topological mapping cylinder structure for which $U_\sigma$ is Stein whenever $\sigma$ lies in the standard Cantor set $\Sigma$. While these Stein neighborhoods are still topologically isotopic rel $K$, they will typically not be diffeomorphic to each other.

\begin{thm} \label{onion} \cite{steintop}.
Let $h:\int H\to U$ be a homeomorphism from a handlebody interior to a Stein surface. Suppose either that $U$ was constructed by Eliashberg's method (abstractly or ambiently) as a handlebody interior, exhibited by the diffeomorphism $h$, or that $U$ was obtained by Theorem~\ref{abstop}, \ref{ambitop} or \ref{handle}(c), with $h$ the given homeomorphism. Then after a topological ambient isotopy, the open subsets $U_\sigma\subset U$ coming from the mapping cylinder structure on $H$ are Stein whenever $\sigma\ \in \Sigma$, and except for one point on each 2-cell, the core $K$ is smooth and totally real. If $H_2(U;\zed)$ is nonzero but finitely generated (for example), then we can assume the Stein surfaces $U_\sigma$ for $\sigma\ \in \Sigma$ realize infinitely many diffeomorphism types, and if $U$ topologically embeds in a (possibly infinite) blowup of $\C^2$ (for example), they realize uncountably many diffeomorphism types.
\end{thm}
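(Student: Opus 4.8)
The plan is to produce the Stein structure on $U$ by the infinite-tower construction underlying Theorems~\ref{abstop}, \ref{ambitop} and \ref{handle}(c), and then to read off the Cantor family from the self-similar geometry of the towers. Regardless of which of the two hypotheses holds, only this construction introduces the smooth variation we want, so I would first (re)realize the Stein structure this way, attaching each $2$-handle as a Casson handle --- or, where a point-set boundary $3$-manifold is needed, as one of the more controlled towers of \cite{FQ}. The key structural observation is that each such tower is an increasing union $T_1\subset T_2\subset\cdots$ of compact smooth sub-towers, and that together with the underlying $0$- and $1$-handles every $T_m$ is a finite handlebody of index $\le2$ satisfying the framing hypothesis of Theorem~\ref{abs2}; hence each $\int T_m$ is Stein, realized as a pseudoconvex sublevel set $\{\varphi<c\}$ of a strictly plurisubharmonic exhaustion $\varphi$. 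Eliashberg's method moreover supplies a continuous $1$-parameter family of such sublevel sets, so within the collar between consecutive truncations the boundaries interpolate while staying pseudoconvex.

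Next I would match this filtration to the mapping cylinder coordinate. Because a Casson handle contains sub-Casson-handles in a self-similar fashion, the sub-towers obtained by following the branches of the underlying rooted tree are naturally indexed by the infinite branches, which surject onto the standard Cantor set $\Sigma$ under the usual ternary coding. Each branch-subtower is again a handlebody interior, hence Stein, while a truncation lying in a gap of $\Sigma$ cuts transversally through a single tower stage and generically fails to be pseudoconvex. I would therefore reparametrize the mapping cylinder coordinate $\sigma$ by a devil's-staircase correspondence so that $\sigma\in\Sigma$ labels exactly the completed pseudoconvex sublevel sets and the gaps label the mid-stage sets. A topological ambient isotopy then aligns each $U_\sigma$ with the corresponding sublevel set; this is where Freedman's identification of towers with open $2$-handles, and the flatness and properness hypotheses of Theorem~\ref{handle}(c), are used to guarantee the isotopy is ambient. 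The assertion that $K$ is smooth and totally real off a finite subset of each $2$-cell is inherited directly from Theorem~\ref{handle}(c).

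For the diffeomorphism counts I would argue by gauge theory. All $U_\sigma$ are handlebody interiors of index $\le2$, so by Freedman theory (Theorem~\ref{abstop}) they are mutually homeomorphic $\rel K$. When $0\ne H_2(U;\zed)$ is finitely generated, the Seiberg--Witten adjunction inequality \cite{LM} bounds the normal Euler numbers and minimal genera of smoothly embedded generators; as $\sigma$ increases through $\Sigma$ the branch-subtowers absorb more and more double points, forcing the minimal genus of a fixed nonzero class to assume infinitely many values, hence infinitely many diffeomorphism types. When $U$ topologically embeds in a (possibly infinite) blowup $Z$ of $\C^2$, each $U_\sigma$ smoothly embeds in the single $4$-manifold $Z$, and I would invoke the end-periodic gauge theory of Taubes \cite{Ta} together with the cardinality argument of DeMichelis--Freedman \cite{DF}: this yields a monotone real-valued diffeomorphism invariant of the nested family whose values are constrained by Donaldson--Seiberg--Witten theory on $Z$. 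Monotonicity forces the invariant to take uncountably many values on $\Sigma$, producing uncountably many diffeomorphism types.

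The hardest part, I expect, is not the nested construction itself but the simultaneous control of two competing demands: each Cantor-point sub-tower must remain a genuinely pseudoconvex sublevel set, so that the Stein condition survives the limit, while the smooth structures must vary enough along $\Sigma$ to be separated by gauge theory. Concretely, the \emph{crux} is establishing strict monotonicity of the gauge-theoretic invariant across the family and upgrading ``infinitely many'' to ``uncountably many'': one must rule out that uncountably many $U_\sigma$ are diffeomorphic, since this would permit smoothly embedding unboundedly many disjoint exotic pieces into the fixed blowup $Z$, contradicting the gauge-theoretic finiteness of $Z$. Making this contradiction precise while keeping the Stein towers compatible with the mapping cylinder coordinate is the technical heart of the theorem.
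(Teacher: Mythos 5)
A preliminary caveat: the survey does not actually prove Theorem~\ref{onion}; it is stated with a citation to \cite{steintop} (in preparation), accompanied only by the remarks that Freedman theory supplies a \emph{topological} mapping cylinder structure with $U_\sigma$ Stein for $\sigma\in\Sigma$, and that the operative hypothesis is that the subhandlebodies with finite towers attached have pseudoconvex boundaries and totally real cores. Measured against those remarks, your overall strategy --- realize $U$ via the towers of \cite{FQ} as in Theorem~\ref{handle}(c), locate the Cantor family inside the tower structure, and separate diffeomorphism types by minimal-genus and by the Taubes/DeMichelis--Freedman periodic-end method --- is the intended one.

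There is, however, a genuine gap at the center of your construction. The subsets $U_\sigma=\psi([0,\sigma)\times\partial H)$ are \emph{radially} nested neighborhoods of the core $K$: they are linearly ordered by inclusion, each contains $K$, and they shrink to $K$ as $\sigma\to0$. The infinite branches of the rooted tree underlying a Casson handle index something entirely different (sequences of double-point choices); following a branch produces a decreasing sequence of sub-Casson-handles attached at successive double points, which do not contain the core of $H$, are not linearly ordered across branches, and are not of mapping-cylinder type. So the ternary coding of branches cannot be matched to the coordinate $\sigma$. Moreover, a fixed tower has only countably many finite truncations $T_m$, so your devil's-staircase relabeling can place a ``completed pseudoconvex sublevel set'' at only countably many points of $\Sigma$; at the uncountably many non-endpoint parameters, $U_\sigma$ must itself be an \emph{infinite} tower union containing $K$. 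Producing uncountably many such infinite towers, nested one inside the next with one ``closing up'' at every Cantor depth, requires recursively restarting the tower construction inside every gap at every scale --- this recursive insertion is the actual technical content of \cite{steintop} and is not supplied by the self-similarity of a single Casson handle. Two further points: ``a handlebody interior, hence Stein'' is false as stated (that is precisely what the framing condition of Theorem~\ref{abs2} governs; Steinness of each $U_\sigma$ must come from exhibiting it as an increasing union of subhandlebodies-with-towers having pseudoconvex boundaries). And since every $U_\sigma$ carries the complex structure restricted from $U$ and the inclusions are homotopy equivalences, the adjunction inequality of \cite{LM} gives the \emph{same} lower bound on minimal genus for all of them; ``absorbing more double points'' raises only the genus of the explicit representatives, so your mechanism does not by itself force the minimal genus to take infinitely many values --- the growth of the lower bounds as $\sigma$ decreases through $\Sigma$ is additional work (cf.\ the genus arguments of \cite{Ann} and \cite{steintop}).
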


\noindent A stronger and more detailed version of this is given in \cite{steintop}, with a more general and precise  hypothesis replacing that of the method of construction. (Basically, $U$ should be constructed by Eliashberg's method, allowing some 2-handles of $H$ to be replaced by generalized Casson handles, so that the resulting subhandlebodies with finite towers attached have pseudoconvex boundaries and totally real cores.) Recall that by Corollary~\ref{structure}, every Stein surface is smoothly isotopic within itself to one admitting a diffeomorphism $h$ as above. In fact, the method of that corollary shows that the conclusion of Theorem~\ref{onion} applies to {\em any\/} Stein surface, once we restrict $\sigma$ to $\Sigma \cap [0,\frac12]$. However, the theorem is most striking in the context of Theorems~\ref{abstop}, \ref{ambitop} and \ref{handle}(c). In those cases, when $H$ is a finite handlebody, the resulting $K$ is compact with neighborhood system $\{U_\sigma\}$, even though $U$ and $U_\sigma$ frequently cannot be diffeomorphic to the interior of a finite handlebody. (Consider $H=S^2\times D^2$, for example.) One would then expect the subsets $U_\sigma$ to nearly always be pairwise nondiffeomorphic, and this can be proven in various cases \cite{steintop}.

\section{Appendix: From Morse functions to infinite handlebodies} \label{Appendix}

Our definition of handlebodies in Section~\ref{Abstract} required them to be Morse-ordered and locally finite, without collars between layers of handles. We saw that this was the correct language for expressing Eliashberg's work in dimension 4 (Theorem~\ref{abs2}). Passing to the cores of the handles gave a correspondence between handlebodies and tamely embedded CW-complexes, suggesting Corollary~\ref{ncplx}. Theorem~\ref{onion} required our narrow definition of handlebodies in order to have the underlying mapping cylinder structure. To extend this theorem to arbitrary Stein surfaces we then had to find Stein handlebodies (narrowly defined) inside them via Corollary~\ref{structure}. We now prove the necessary lemmas showing that Morse functions can be refined (introducing critical points) to yield handle decompositions in our narrow sense, even if we start with infinitely many critical points.

\begin{lem}\label{decomp}
Let $U$ be a smooth $m$-manifold with an exhausting Morse function $\varphi$. Then $U$ is diffeomorphic to the interior of a handlebody $H$ with the same maximal index $k$ as $\varphi$.
\end{lem}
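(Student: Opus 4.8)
The plan is to realize $U$ as the increasing union of a nested sequence of compact handlebodies, one for each stage of the exhaustion provided by $\varphi$, and to take the interior of the union. First I would choose regular values $0=r_0<r_1<r_2<\cdots\to\infty$ so that each slab $\varphi^{-1}[r_i,r_{i+1}]$ contains exactly one critical point, and set $W_i=\varphi^{-1}[0,r_i]$; these are compact, satisfy $W_i\subset\int W_{i+1}$ and exhaust $U$, and $W_{i+1}$ is obtained from $W_i$ by attaching a collar followed by a single handle $h_i$ whose index $\ell_i\le k$ equals that of the enclosed critical point (e.g.\ \cite{M}). Since a collar can be absorbed by a diffeomorphism of $W_i$, I may discard the collars and regard $W_{i+1}=W_i\cup h_i$ up to diffeomorphism rel $\partial W_i$. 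The case $k=0$ is trivial ($U$ is a disjoint union of open disks, hence a disjoint collection of $0$-handles), so I assume $k\ge 1$.

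I would then build compact Morse-ordered handlebodies $H_0\subset H_1\subset\cdots$ of maximal index $\le k$, together with compatible diffeomorphisms $H_i\cong W_i$, so that $H_{i+1}$ is obtained from $H_i$ by attaching finitely many handles of index $\le k$. The essential handle to add at stage $i$ is $h_i$: its attaching sphere $S^{\ell_i-1}$ can be made disjoint, by general position in the $(m-1)$-manifold $\partial H_i$, from the belt spheres of all handles of index $\ge\ell_i$ (the two dimensions sum to less than $m-1$ exactly in that range), so, as in the standard rearrangement lemma, $h_i$ can be slid down to attach only to the subhandlebody of handles of index $<\ell_i$, keeping $H_{i+1}$ Morse-ordered. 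Rearrangement never raises indices, and the births introduced below use only canceling pairs of index $\le k$, so the maximal index stays equal to $k$. Passing to $H=\bigcup_i H_i$, the compatible diffeomorphisms have a well-defined limit, since each point of $U=\bigcup_i\int W_i$ lies in some $W_i$ on which all later maps agree, and this limit is a diffeomorphism $\int H\cong U$.

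The one real difficulty is to carry out this Morse-ordering while keeping the infinite handlebody locally finite, and this is where the subdivision in the statement enters. Morse-ordering pushes each low-index handle inward, to attach to handles created at earlier stages; but if arbitrarily late handles of small index are all routed back onto a fixed inner region, infinitely many handles accumulate there and the manifold structure is destroyed. I would resolve this by subdividing: before sliding a late handle of small index down, I birth a canceling pair of handles of index $\le k$ (for instance a $(0,1)$-pair, or more generally an $(\ell,\ell+1)$-pair with $\ell+1\le k$) near the current frontier $\partial H_i$, manufacturing fresh low-index handles there, and I then Morse-order the late handle by attaching it to these freshly created handles rather than to the old, deep ones. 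With the bookkeeping arranged so that the handles introduced at stage $i$ attach only to handles from stages $i-1$ and $i$, every handle meets only finitely many others, giving local finiteness; and since each birth is a standard canceling insertion it changes neither the diffeomorphism type of the stage nor the maximal index $k$.

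The main obstacle, then, is precisely this tension between Morse-ordering (which pulls low-index handles inward) and local finiteness (which forbids accumulation on a compact inner region), reconciled by the controlled births. The remaining ingredients---collar absorption, the general-position rearrangement of a single handle, and the colimit identification $\int H\cong U$---are routine.
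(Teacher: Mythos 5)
You have the right skeleton (exhaust $U$ by the $W_i$, attach one handle per slab, pass to a limit of diffeomorphisms), and more importantly you have correctly isolated the real difficulty: Morse-ordering pulls low-index handles inward while local finiteness forbids accumulation there, so one must manufacture fresh low-index handles at the frontier. That is exactly the tension the paper's proof is organized around. But your mechanism for manufacturing them --- birthing isolated canceling pairs near $\partial H_i$ --- has a genuine gap, in two places. First, the pairs themselves face the routing problem they are meant to solve: for Morse-ordering, the index-$\ell$ member of a canceling $(\ell,\ell+1)$-pair must attach to handles of index $<\ell$, and its attaching sphere lies in $\partial H_i$, which may be covered entirely by boundary of old high-index handles. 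For example, the $1$-handle of a $(0,1)$-pair has one foot on the new $0$-handle, but its other foot must land on a $0$-handle of $H_i$, which may exist only deep inside; chaining the pairs from stage to stage could fix this, but that is an induction hypothesis you have not set up. Second, and more seriously, even granting fresh low-index handles at the frontier, nothing forces the attaching sphere of $h_i$ to land on them after the rearrangement slide. General position only makes it disjoint from the belt spheres of high-index handles; flowing it down then deposits it somewhere in the boundary of the \emph{entire} subhandlebody of index $<\ell_i$, old handles included. Nor can you simply isotope it onto the fresh bumps afterwards: the attaching sphere may be homotopically essential in $\partial H_i$, hence not isotopic into a small neighborhood of a few freshly birthed handles, and altering its isotopy class in general changes the diffeomorphism type of $W_{i+1}$.

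The paper resolves both problems at once by \emph{not} discarding the collars: it gives each collar $C_i=[0,1]\times\partial P_i$ the product handle structure induced by a handle decomposition of $\partial P_i$, so that each $l$-handle $h$ of $\partial P_i$ contributes an $l$-handle $[\frac12,1]\times h$ and an $(l+1)$-handle $[0,\frac12]\times h$. The outer layer is then a complete Morse-ordered handle decomposition of a neighborhood of the frontier, containing handles of every index $0,\dots,m-1$, so everything attached at the next stage (including $h_i$, whose attaching region lies in that neighborhood) can be slid down within the collar and meets only handles from the adjacent stage. This is your ``birth of canceling pairs'' carried out systematically enough to cover the whole frontier. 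The price is that the collar decomposition creates handles of index up to $m$; the paper then takes $H$ to be the subhandlebody of index $\le k$ (minus the outer $k$-handles over each $\partial P_i$) and recovers $\int H\cong U$ not tautologically but as a limit of diffeomorphisms pushing the discarded handles out through the collars. To salvage your version you would essentially have to upgrade your isolated births to a full collar's worth of them, at which point you have reproduced the paper's argument.
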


\begin{proof} Assume $\varphi$ has infinitely many critical points, since the finite case is well-known. After a slight perturbation, $\varphi$ determines a decomposition $U= \bigcup_{i=0}^\infty P_i$, where $P_0$ is empty and $P_{i+1} = P_i \cup C_i \cup h_i$ for a collar $C_i \approx [0,1] \times \partial P_i$ and handle $h_i$ of index $\le k$. Any handle decomposition of $P_i$ can be extended over $C_i$ using an arbitrary handle decomposition of $\partial P_i$: Each $l$-handle $h$ of $\partial P_i$ contributes an $l$-handle $[\frac12,1] \times h$ and an $(l+1)$-handle $[0,\frac12] \times h$. A perturbation of the product structure on $C_i$ guarantees that each of its handles attaches to handles of lower index as required, as does the additional handle $h_i$ of $P_{i+1}$. Thus, we inductively obtain a handle decomposition $H^*$ of $U$ (containing many $m$-handles) that is compatible with the collars. Let $H \subset H^*$ be the subhandlebody consisting of all handles of index $\le k$ except for $k$-handles of the form $[\frac12,1] \times h$ along each $\partial P_i$. Then for each $i$, $H\cap C_i=[0,1]\times Q_i$, where $Q_i\subset \partial P_i$ is the subhandlebody of handles with index $<k$. It is now routine to construct a diffeomorphism from $\int H$ to $U$: First inductively construct, for each $i$, a diffeomorphism from $\int H$ to $\int (H \cup P_{i+1})$, by pushing the unwanted handles of $C_i$ out of $P_{i+1}$ using the collar structure. Then check that these diffeomorphisms can be assumed to converge to the required diffeomorphism $\int H \to U$.
\end{proof}

\begin{lem}\label{steindec}
Every Stein surface is diffeomorphic to the interior of a handlebody whose handles all have index $\le 2$, and for which each 2-handle is attached along a Legendrian knot in the relevant contact 3-manifold, with framing obtained from the contact framing by adding one left twist. The Stein structure induced by this handle decomposition via Theorem~\ref{abs2} is homotopic (through almost-complex structures) to the original.
\end{lem}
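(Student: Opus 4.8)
The plan is to extract the handle decomposition directly from the complex geometry and then repair it in the narrow (Morse-ordered, locally finite, collarless) sense using the Appendix technology. Since $U$ is Stein, classical Stein theory provides an exhausting strictly plurisubharmonic function, and after a $C^2$-small perturbation (plurisubharmonicity being an open condition) we may assume it is a Morse function $\varphi$. Its critical points all have index $\le 2$, since their descending disks are totally real. The point of starting here rather than from an arbitrary Morse function is that the resulting Morse decomposition is already \emph{Stein-compatible}: as explained in the paragraph preceding the proof of Theorem~\ref{abs2}, each index-$2$ critical point of a $J$-convex function yields a $2$-handle attached along a Legendrian knot in the contact level set $\varphi^{-1}(a)$, with framing obtained from the contact framing by adding one left twist. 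Thus the framing condition of the lemma holds automatically for the genuine $2$-handles of $\varphi$.

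First I would convert this Morse picture into a narrow handlebody. The obstruction is exactly the one addressed in the Appendix: with infinitely many critical points, collapsing the intervening collars need not preserve Morse-ordering or local finiteness. I therefore run the construction of Lemma~\ref{decomp} with $k=2$, producing a handlebody $H$ with all indices $\le 2$ and a diffeomorphism $\int H \to U$. The subtlety is that Lemma~\ref{decomp} introduces auxiliary handles when subdividing each collar $C_i \approx [0,1]\times\partial P_i$, so I must ensure that the new index-$2$ handles of $H$ again satisfy the framing condition, while simultaneously keeping track of the induced almost-complex structure.

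The main obstacle is precisely this Stein-compatibility of the subdivision, and my plan is to carry out Lemma~\ref{decomp} \emph{within the $J$-convex category}. The auxiliary handles that actually enter $H$ all have index $\le 2$; the handles of index $\ge 3$ produced by the collar decomposition, together with the excluded top collar $2$-handles of the form $[\frac12,1]\times h$, are pushed out of $P_{i+1}$ by the diffeomorphism $\int H\to U$ and never enter $H$. I would realize the needed low-index auxiliary handles by modifying $\varphi$ inside the regular collars, creating cancelling pairs of critical points of a $J$-convex function via Eliashberg's technique for $J$-convex Morse functions (\cite{E}, \cite{CE}); since only indices $\le 2$ are created, each new $2$-handle is again the handle of a plurisubharmonic critical point, hence Legendrian with the correct framing. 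The higher-index combinatorics that cannot be realized plurisubharmonically is confined to the pushed-out region and is treated at the smooth level exactly as in Lemma~\ref{decomp}. The hard part is verifying that this plurisubharmonic subdivision can be arranged to meet the Morse-ordering and local-finiteness bookkeeping of Lemma~\ref{decomp} at the same time; this step must be run with care rather than merely quoted.

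Finally I would track the homotopy class. Because the entire narrow decomposition arises from a single $J$-convex function associated to a structure homotopic to $J$ --- the original $\varphi$ together with the plurisubharmonically created cancelling pairs, which do not change the homotopy class of the almost-complex structure --- the Stein structure that Theorem~\ref{abs2} reconstructs from the Legendrian attaching data is homotopic to $J$. Concretely, on the essentially $2$-complex $U$ the homotopy class is detected by the relative Chern data computed from the rotation numbers of the Legendrian attaching circles, and these rotation numbers are exactly those determined by $\varphi$, while the created cancelling pairs contribute trivially, so the total reproduces $[J]$. This yields the stated handlebody together with the claimed homotopy through almost-complex structures.
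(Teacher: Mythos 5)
Your starting point agrees with the paper's (an exhausting plurisubharmonic Morse function, whose genuine index-2 critical points automatically give Legendrian attaching circles with the correct framing), and your general shape --- run the collar subdivision of Lemma~\ref{decomp} while creating the auxiliary low-index handles in canceling pairs by Eliashberg's method --- is also the right one. But the step you defer (``this step must be run with care rather than merely quoted'') is in fact the entire content of the proof, and you have not supplied the idea that makes it work. The problem is Morse-ordering: each new 2-handle, whether a genuine descending disk of $\varphi$ or an auxiliary handle $[0,\frac12]\times h^1$ from the collar, must attach into the union of 0- and 1-handles of the layer below it. Its attaching circle, however, sits somewhere in the full contact level set $M=\partial P_i$, and must be moved into that low-index region \emph{without} leaving the Legendrian category or changing the contact framing --- a general-position smooth isotopy, which is all Lemma~\ref{decomp} uses, destroys exactly the data you need to preserve. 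Nothing in your proposal explains why such a contact isotopy exists.

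The paper's resolution is the notion of a \emph{sufficiently fine} 1-skeleton $K_0$ of a contact cell decomposition, carried along as an induction hypothesis: every 1-complex in $M$ (in particular, the 1-skeleton $K\supset C$ of a contact cell decomposition at the next level, containing the next Legendrian attaching circle $C$) can be contact-isotoped, fixing $K_0$, into an arbitrarily small neighborhood $V$ of $K_0$ lying in the low-index part of the collar. Establishing this requires convex surface theory, the Legendrian Realization Principle, Eliashberg's uniqueness of tight contact structures on the ball, and Gray's theorem; and splicing the modified handlebodies back in without disturbing the induction hypotheses requires the classification of tight contact structures on balls and solid tori (the latter via the framing condition on the dividing set). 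None of this apparatus appears in your outline, and without it the induction does not close up. Your final paragraph on the homotopy class is also looser than what is needed --- the paper tracks the homotopy of almost-complex structures through the induction, allowing deletions of finitely many points and using the contractibility of the space of complex structures inducing a given contact plane field, rather than appealing to rotation numbers --- but that is repairable; the missing contact-topological machinery for Morse-ordering is the essential gap.
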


\noindent As we have seen, each 2-handle is attached to a finite subhandlebody with all indices $\le 1$, whose boundary, with its canonical contact structure, is the relevant contact 3-manifold in the lemma.

\begin{proof} We wish to apply the method of the previous proof, but extra care is required for controlling 2-handle framings. We begin with an observation of Goodman \cite{noah}, that every (closed) contact 3-manifold $M$ admits a {\em contact cell decomposition\/}. Using convex surface theory (\cite{cnvx}, \cite{H}, see also \cite{OS}), we construct a contact cell decomposition whose 1-skeleton contains any preassigned Legendrian circle $C$ in $M$. We then show that the 1-skeleton $K$ of such a decomposition is {\em sufficiently fine\/}, by which we mean that every 1-complex in $M$ can be contact isotoped, fixing $K$, into an arbitrarily small neighborhood of $K$. For simplicity, we assume $M$ is tight. Then a tame cell decomposition of $M$ is {\em contact\/} if its 1-cells are Legendrian and its 2-cells are convex with twisting $-1$. Such a decomposition can be constructed, for example, from a triangulation, whose 1-skeleton we can assume contains $C$, by isotoping the 1-cells rel $C$ to be Legendrian with negative twisting. (We measure the twisting using adjacent 2-cells, and arrange negative twisting on the 1-cells of $C$ using a suitably twisted model triangulation near $C$.) The 2-cells can then be assumed to be convex triangles with Legendrian boundaries and negative twisting on each edge, so by the Legendrian Realization Principle (\cite{K}, \cite{H}, see also \cite{OS}), we can subdivide to obtain the required structure. To see that such a 1-skeleton is sufficiently fine, we first contact isotope a given 1-complex $K'$ into a preassigned neighborhood $W$ of the 2-skeleton. Choose a 3-ball in the interior of each 3-cell, with convex boundary contained in $W$. By convex surface theory, the boundary is essentially uniquely determined, so by \cite{ball}, each ball can be contactomorphically identified with the standard ball in $\R^3$. Using Gray's Theorem \cite{Gray}, it is now easy to push $K'$ off of each ball into $W$ by a contact isotopy fixing $K$. For any neighborhood $V$ of the 1-skeleton, we can choose $W$ sufficiently thin that $K'$ can then be pushed into $V$ as required. This follows from examining a standard neighborhood of each 2-cell, where, for example, we push first toward the two elliptic boundary points, then transversely, preserving the characteristic foliation away from the elliptic points in each case.

Now let $(U,J)$ be an arbitrary Stein surface. The basic theory of Stein manifolds \cite{CE} guarantees an exhausting plurisubharmonic Morse function $\varphi$ on $U$, that we can assume is 1-1 on its critical set. Its critical points all have index $\le \dim_\C U=2$, and its regular level sets inherit contact structures. Each descending disk of an index-2 critical point intersects nearby level sets below it in a Legendrian circle, and when we thicken the disk to obtain a 2-handle, it is attached with framing differing from the contact framing by a left twist, as desired for the lemma. As before, the lemma is easy and well-known if $\varphi$ has only finitely many critical points, so we again restrict to the infinite case and construct the required handle structure from $\varphi$ using induction. For our induction hypotheses, we assume there is a handlebody $H$, whose handles all have index $\le 2$ and satisfy the required framing condition, with a diffeomorphism $f: H\to P=\varphi^{-1}[0,a]$ for a preassigned regular value $a$. By Theorem~\ref{abs2} and its proof (in the direction independent of Lemma~\ref{steindec}), there is an induced Stein structure $J_H$ on $H$ and contact structure on $\partial H$. We inductively assume we are given a neighborhood $V$ in $\partial H$ of a sufficiently fine 1-complex $K_0$, whose $l$-cells ($l=0,1$) are disjoint from the handles of $H$ with index $>l$. (In comparison with the previous proof, $V$ corresponds to the intersection of $\partial P_i$ with the handles of $H^*$ of index $<k$, containing the foundation $Q_i$ for the next layer of handles.) We also assume that $f|\partial H$ is a contactomorphism to $M=\partial P$ with contact structure $\xi$ induced by $J$, and that on $P$ minus finitely many interior points, $f_*J_H$ agrees with $J$ up to homotopy through almost-complex structures inducing the given $\xi$. (It seems unlikely that deleting points is actually necessary, but it is easier to avoid the issue.) Note that the space of complex structures on $TU|M$ inducing a given plane field on $TM$ is contractible, so $J$ is essentially determined on $M$ by $\xi$. The induction hypotheses are trivially satisfied for $a=-1$. For a regular value $b>a$ such that $[a,b]$ contains a unique critical value, $\varphi^{-1}[0,b]$ is obtained from $P$ by attaching a collar and a handle; we wish to extend over these to reproduce the induction hypotheses with $b$ in place of $a$.

We begin the induction step by adding handles to $H$ along $V$, to form part of a collar of $\partial H$ as in the proof of Lemma~\ref{decomp}. Let $D$ denote the descending disk of the unique critical point in $\varphi^{-1}[a,b]$, with $C=\partial D$ in $M=\varphi^{-1}(a)$ a Legendrian circle or two points or empty, depending on the index of the critical point. Let $K\supset C$ be the 1-skeleton of a contact cell decomposition of $M$. Since the previously given complex $K_0\subset \partial H$ is sufficiently fine, there is a contact isotopy of $M$ sending $K$ into $f(V)$. By Gray's Theorem, we can assume the isotopy pushes the 0-cells of $K$ sufficiently close to those of $f(K_0)$ that they avoid the 1- and 2-handles of $H$. By reversing the isotopy and extending it to an isotopy of $f$ supported near $\partial H$, we can adjust $f$ so that $f(V)$ contains $K$ (as originally located, so in particular $C=\partial D\subset f(V)$), without disturbing the induction hypotheses. By construction, the 1-complex $f^{-1}(K)\subset V\subset \partial H$ thickens to a handlebody $Q\subset V$ whose $l$-handles ($l=0,1)$ avoid the handles of $H$ with index $>l$. (Compare with $Q_i$ in the previous proof.) To extend $H$ over $[0,1]\times (\mbox{\rm 0-handles})\subset [0,1]\times Q$, let $H'$ be the result of attaching a canceling 0-handle and 1-handle to $H$ at each vertex of $f^{-1}(K)$, by Eliashberg's method. Then $H'$ is a (Morse-ordered) handlebody, with a Stein structure $J_{H'}$ extending $J_H$, and a contact boundary. The latter agrees with $(\partial H,f^*\xi)$ outside a collection of 3-balls. Since a tight contact 3-ball is determined by its boundary \cite{ball}, there is a contactomorphism $\partial H'\to\partial H$. This extends to a diffeomorphism $f':H'\to H$ that is supported on a disjoint union of 4-balls, so after we remove the center points of the 4-balls, $f'_*J_{H'}$ is homotopic to $J_H$ preserving $f^*\xi$ on $\partial H$. We replace $H$ by $H'$ and $f$ by $f\circ f'$, preserving the induction hypotheses (ignoring $V$). We next construct the rest of $[0,1]\times Q$. For each 1-cell of $K$, the preimage in $\partial H'$ is a Legendrian arc with endpoints in $\{1\}\times Q$. After trimming the ends of this arc by removing its intersection with $(\frac34,1]\times Q$, connect its new endpoints by a 1-handle, then add a canceling 2-handle along the resulting Legendrian circle. If we add the handles by Eliashberg's method, the new 2-handles will satisfy the required framing condition. The resulting handlebody $H''$ differs from $H'$ only on a collection of 4-balls, so the previous argument allows us to replace $H$ by $H''$ without disturbing the induction hypotheses (again ignoring $V$). The preimage $C''$ of $C$ in $\partial H''$ sits in $\{1\}\times Q$ on top of the partial collar, and is a Legendrian circle if $\dim C=1$.

To complete the induction step, we must attach a new handle along $C''$, suitably map the result onto $\varphi^{-1}[0,b]$, and reconstruct $K_0\subset V$. Let $\hat P\subset U$ be obtained from $P$ by attaching a handle with core $D$ using Eliashberg's method. Let $\hat H$ be obtained from $H''$ by attaching a handle $h$ with the corresponding attaching data, so we get a diffeomorphism $\hat f:\hat H\to \hat P$. (For a 2-handle, the attaching data consist of the Legendrian circle $C''$ and a normal framing, which by construction satisfies the condition required for the lemma.) The two new handles intersect $\partial \hat H$ and $\partial \hat P$, respectively, in regions $T_i$ that are either a solid torus or a punctured 3-sphere, depending on the index of $h$. In each case, the (tight) contact structure is determined on $T_i$ by its restriction to $\partial T_i$. (For punctured 3-spheres, this follows from \cite{ball}. In the remaining case, the 2-handle attaches to a neighborhood of a Legendrian circle, whose boundary can be assumed convex with a 2-component dividing set $\Gamma$. The framing condition guarantees that each component of $\Gamma$ intersects the meridian of $T_i$ in a single point, so the classification of contact structures on a solid torus uniquely determines the contact structure \cite{Gi}, \cite{H}, cf. also Theorem~5.1.30 of \cite{OS}.) Thus, we can assume $\hat f$ restricts to a contactomorphism $\partial\hat H\to \partial\hat P$. (Any self-diffeomorphism of $S^3$ extends over $B^4$.) Since the new handles are 4-balls, we recover a homotopy from $\hat f_*J_{\hat H}$ to $J|\hat P$ in the complement of a finite set, preserving the contact boundary as usual. By a result in \cite{CE}, we can assume $\hat P$ was constructed so that there is a new exhausting plurisubharmonic function $\hat \varphi$ on $U$ for which $\partial\hat P$ and $\varphi^{-1}(b)$ are level sets, with no critical points in between. We can then construct an isotopy $f_t$ of $\hat f$ such that $f_1(\partial\hat H)=\varphi^{-1}(b)$ and each $f_t(\partial\hat H)$ is a level set of $\hat\varphi$, hence a contact manifold. By Gray's Theorem, we can assume each $f_t|\partial\hat H$ is a contactomorphism. Then $f_1:\hat H\to \varphi^{-1}[0,b]$ satisfies the induction hypotheses at the new level, once we construct a new $K_0$ and $V$. For this last step, recall that $K$ is the 1-skeleton of a contact cell decomposition of $M$. We pull this contactomorphically back to $\partial H''$, and modify it inside a neighborhood of $C''$ in $\{1\}\times Q$ to obtain a cell decomposition of $\partial \hat H$. This only requires care when $h$ has index 2. In that case, we require a pushoff of $\partial \partial_-h$ in $\partial H''$ to be a subcomplex of the new structure, and fill in $\partial \hat H\cap h$ with a single 2-cell and 3-cell. Since the 2-cell intersects the dividing set of $\partial \partial_-h$ in two points, it has twisting $-1$. Now for any index, we make the decomposition contact as in the first paragraph, noting that no change occurs outside a neighborhood of $C''$, and that no subdivision is necessary in $\partial h$ in the index-2 case. The new sufficiently fine 1-skeleton $\hat K_0$ lies in $\hat V=\partial\hat H\cap((\{1\}\times Q)\cup h)$, it avoids the 2-handles of $\hat H$, and after a contact isotopy supported in $\hat V$, its 0-cells avoid the 1-handles of $\hat H$. Thus, $\hat K_0$ and $\int \hat V$ complete the induction hypotheses.

We have now inductively constructed a nested sequence of handlebodies, each with an embedding into $U$. Since the handles added at the $i^{th}$ stage are attached along a neighborhood $V$ contained in a subhandlebody added at the $(i-1)^{st}$ stage, the union $H_\infty$ is locally finite, and hence an infinite handlebody. Each embedding into $U$ agrees with the previous one on the domain of the latter, except on a neighborhood of its boundary. It is routine to arrange these embeddings so that each point of $\int H_\infty$ has a neighborhood on which all but finitely many embeddings agree. We then have a limiting map $f_\infty:\int H_\infty\to U$ that is easily seen to be a diffeomorphism. (Compare with the diffeomorphism constructed in the previous proof.) Similarly, the homotopies of almost-complex structures yield a limiting homotopy from $(f_\infty)_*J_\infty$ to $J$ on the complement of a discrete and closed subset. (Here $J_\infty$ denotes the Stein structure inherited by $\int H_\infty$ since its 2-handles are suitably framed.) Since the (2-dimensional) core of $H_\infty$ can be chosen to miss the singular set, it is easy to construct a new homotopy that is defined everywhere on $U$.
\end{proof}


\end{document}